\DeclareSymbolFont{rsfscript}{OMS}{rsfs}{m}{n}
\DeclareSymbolFontAlphabet{\mathrsfs}{rsfscript}
\renewcommand{\mathcal}{\mathrsfs}
\newcommand{\DD}{{\mathbf D}}
\newcommand{\C}{{\mathbb C} }
\newcommand{\R}{{\mathbb R} }
\newcommand{\cC}{{\mathcal C} }
\newcommand{\cF}{{\mathcal F} }
\newcommand{\cI}{{\mathcal I} }
\newcommand{\cK}{{\mathcal K} }
\newcommand{\cL}{{\mathcal L} }
\newcommand{\cN}{{\mathcal N} }
\newcommand{\cO}{{\mathcal O} }
\newcommand{\cT}{{\mathcal T} }
\newcommand{\cX}{{\mathcal X} }
\newcommand{\cY}{{\mathcal Y} }
\newcommand{\cH}{{\mathcal H} }
\newcommand{\cB}{{\mathcal B}}
\newcommand{\wt}{\widetilde}
\newcommand{\wh}{\widehat}
\def\ol#1{{\overline{#1}}}
\def\psh{{plurisubharmonic}}
\newtheorem{theorem} {Theorem} [section]
\newtheorem{definition}[theorem] {Definition}
\newtheorem{lemma}[theorem]  {Lemma}
\newtheorem{remark}[theorem]  {Remark}
\newtheorem*{remark*}{Remark}
\newtheorem{proposition}[theorem] {Proposition}
\newtheorem{corollary}[theorem] {Corollary}
\newtheorem*{theorem*}{Theorem}
\def\uu#1{{\underline{\underline{#1}}}}
\def\ke{K{\"a}h\-ler-Ein\-stein }
\def\wp{Weil-Pe\-ters\-son }
\def\ks{Ko\-dai\-ra-Spen\-cer }
\def\ka{K{\"a}h\-ler }
\def\ii{\sqrt{-1}}
\def\ddb{\sqrt{-1}\partial\overline{\partial}}
\def\C{\mathbb{C}}
\def\cinf{C^\infty}
\def\psh{plurisubharmonic}
\def\we{\wedge}
\def\ii{\sqrt{-1}}
\def\ddb{\sqrt{-1}\partial\overline\partial}
\def\pt{\partial}
\def\backslash{\setminus}
\begin{document}

\title[The Weil-Petersson current on Douady spaces]{The Weil-Petersson current\\ on Douady spaces}

\author{Reynir Axelsson}
\address{University of Iceland, Dept.\ of Mathematics,
Dunhaga 5, \hbox{IS-107}
Reykjav\'{i}k, Iceland} \email{reynir@raunvis.hi.is}

\author{Georg Schumacher}
\address{Fachbereich Mathematik der Philipps-Universität,
Hans-Meerwein-Straße, Lahnberge, D-35032 Marburg, Germany}
\email{schumac@mathematik.uni-marburg.de}

\date{}

\begin{abstract}
The Douady space of compact subvarieties of a \ka manifold is equipped with the \wp current, which is everywhere positive with local continuous potentials, and  of class $\cinf$ when restricted to the locus of smooth fibers. There a Quillen metric is known to exist, whose Chern form is equal to the \wp form. In the algebraic case, we show that the Quillen metric can be extended to the determinant line bundle as a singular hermitian metric. On the other hand the determinant line bundle can be extended in   such a way that the Quillen metric yields a singular hermitian metric whose Chern form is equal to the \wp current. We show a general theorem comparing holomorphic line bundles equipped with singular hermitian metrics which are isomorphic over the complement of a snc divisor $B$. They differ by a line bundle arising from the divisor and a flat line bundle. The Chern forms differ by a current of integration with support in $B$ and a further current related to its normal bundle. The latter current is equal to zero in the case of Douady spaces due to a theorem of Yoshikawa on Quillen metrics for singular families over curves.

\end{abstract}
\maketitle

\section{Introduction}
The use of the \wp metric on a moduli space is the key method to apply complex analytic and geometric methods. As the Chern form of a Quillen metric on a certain determinant line bundle, the \wp form provides a link to methods of Algebraic Geometry.

In 1985 Wolpert showed \cite{wo1} that Mumford's line bundles $\lambda_{p,n}$ on a moduli space $\mathcal M_{p,n}$ of (punctured) Riemann surfaces possess a Quillen metric whose Chern form is equal to the classical \wp form. Moreover, he showed that the local potentials for the \wp forms extend to the Deligne-Mumford compactification of the moduli space in a continuous way such that the curvature current is strictly positive. He applied Richberg's argument from \cite{ri} to approximate the corresponding singular hermitian metric by a metric of class $\cinf$ with strictly positive curvature form, thus giving an analytic proof of Mumford's ampleness theorem for $\lambda$ from \cite{m}.

These results serve as a guiding principle for the analytic investigation of moduli spaces in higher dimensions. The analytic torsion for $\ol\pt$-operators over Riemann surfaces was studied by Zograf and Takhtadzhyan in \cite{zt}. The general result for determinant line bundles for proper, smooth maps and hermitian vector bundles is due to Bismut, Gillet, and Soulé \cite{bgs}.

Based upon the results from \cite{bgs} for moduli spaces of algebraically polarized extremal, compact \ka manifolds, a determinant line bundle was constructed, together with a Quillen metric, whose Chern form is the generalized \wp form \cite{f-s}. Included are moduli spaces of canonically polarized manifolds and moduli of polarized Calabi-Yau manifolds.  In \cite{sch:inv12} it was shown that a the \wp form extends as a positive current to a suitable compactification of the moduli space, and that the determinant line bundle can be extended as a holomorphic line bundle, whose curvature current is positive and extends the \wp form. Similar constructions were carried out for moduli spaces of stable vector bundles \cite{b-s2}, moduli spaces of Higgs bundles \cite{b-s-higgs},   and other cases such as Hurwitz spaces \cite{abs}.

In this article we will study Douady spaces \cite{do}. A \wp form for the space of compact submanifolds of a \ka manifold was constructed in \cite{b-s}, and further results given in \cite{a-s}. We will use Yoshikawa's theorem \cite{yo} on the singularities of Quillen metrics in an essential way.

Given a \ka manifold $(Z,\omega_Z)$ and an embedded family of subspaces $\cX \subset Z\times S$ like in \eqref{eq:doudi}, the {\em geodesic curvature} at a tangent vector $v=\pt/\pt s$ of $S$ is the pointwise $\omega_Z$-norm of the horizontal lift of $v$, and its integral over the fiber defines the \wp norm of $v$. If $\omega_\cX$ is the closed form on $\cX$ that is induced by $\omega_Z$, then the \wp form satisfies a fiber integral \eqref{eq:fibint0}, which is also meaningful when singular fibers are included. A theorem of Varouchas \cite{v1,v2} implies that the fiber integral possesses continuous, plurisubharmonic local potentials. We apply the theorem of Bismut, Gillet, and Soulé to prove that a certain determinant line bundle carries a Quillen metric, whose curvature form is equal to the \wp form.

So far this relation between Quillen metric on the determinant line bundle and \wp form only exists over the locus of smooth fibers. The determinant line bundle exists also over the whole component of the Douady space, and the \wp form exists as a current.

In \cite{sch:inv12,sch:arxiv} we showed a result about the extension of a holomorphic line bundle, equipped with a (singular) hermitian metric provided the curvature can be extended as a current. In this process the compactifying set of the underlying space may have to be blown up.

Let $\DD$ be a component of the Douady space having a non-empty intersection with the locus of smooth fibers. In our situation we have two holomorphic line bundles equipped with hermitian metrics on $\DD$: One line bundle is the determinant line bundle (called $\lambda(\xi)$) for the universal family over $\DD$, and one can see that the Quillen metric $h^Q_\xi$ defines a singular hermitian metric on $\lambda(\xi)$, whose curvature need not be positive. On the other hand we are given an extension of the determinant line bundle over the locus $\DD'\subset \DD$ of smooth fibers, based upon the \wp current. (Here a blow up $\tau:\wt\DD \to \DD$ of the locus of singular fibers in $\DD$ might be necessary --- the locus of singular fibers being replaced by a simple normal crossings divisor $B$.) These holomorphic line bundles differ by a certain factor. For such situations we prove the following result (cf.\ Theorem~\ref{th:flat}):
\begin{theorem*}
Let $Z$ be a complex manifold and $B\subset Z$ a smooth hypersurface. Let $(\cL,h)$ be a holomorphic line bundle on $Z$, whose restriction to $Z'=Z\backslash B$ is isomorphic to the trivial line bundle $(\cO_{Z'},1)$ carrying the trivial hermitian metric. Then
$$
\cL\simeq \cO_Z(\beta\cdot B) \otimes L,
$$
where $\beta$ is a rational number, and $L$ a flat line bundle.

There exists a holomorphic section $v$ of the normal bundle $N_{B|Z}$ such that the Chern current satisfies
$$
 c_1(\cL,h)= \beta[B] + T_v
$$
where $[B]$ is the current of integration and
$$
T_v(\varphi) = S_v(d\varphi)
$$
with
$$
S_v(\psi) =\int_B \mathrm{Re}(v)\,\llcorner \, \psi.
$$
In particular, the current $T_v$ with support in $B$ is exact.
\end{theorem*}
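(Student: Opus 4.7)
The plan is to reduce everything to local analysis transverse to $B$, starting from the global trivializing section of $(\cL,h)$ on $Z'$. Let $\sigma\in H^0(Z',\cL)$ be the section induced by the isomorphism, so $|\sigma|_h\equiv 1$. Near a point of $B$, choose local coordinates $(z_1,\ldots,z_n)$ on a polydisc $U$ with $B\cap U=\{z_1=0\}$ and a local holomorphic frame $\tau$ of $\cL$. Writing $\sigma = f\tau$ on $U\setminus B$, the function $f$ is holomorphic and nowhere vanishing there, while the local weight $\varphi = -\log|\tau|_h^2$ of $h$ satisfies $\varphi = 2\log|f|$ on $U\setminus B$. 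Consequently $c_1(\cL,h)$ vanishes on $Z'$ and is a closed $(1,1)$-current supported in $B$, so the entire problem reduces to understanding the behavior of $\varphi$ transversally to $B$.

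For the algebraic decomposition, the local winding number of $f$ around $B$ together with a possible rational correction coming from the flat factor yields a number $\beta\in\mathbb{Q}$: passing to a sufficiently divisible tensor power $\cL^{\otimes k}$, the section $\sigma^{\otimes k}$ extends meromorphically across $B$ with integer order $m=k\beta$, so $\cL^{\otimes k}\otimes\cO_Z(-mB)$ admits a global holomorphic section of constant modulus and is therefore trivial. Descending through the $k$-fold cyclic cover identifies a flat line bundle $L$ whose monodromy around $B$ is a $k$-th root of unity and produces the required decomposition $\cL\simeq\cO_Z(\beta\cdot B)\otimes L$.

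For the curvature formula, the $L^1_{\rm loc}$ hypothesis on the weight $\varphi = 2\log|f|$ forces the holomorphic expansion of $\log f - \beta\log z_1$ in $z_1$ to have at most a simple polar term: one can write $\log f = \beta\log z_1 + a(z')/z_1 + (\text{holomorphic on }U)$ for some $a$ holomorphic on $U\cap B$. Substituting, $\varphi = 2\beta\log|z_1| + 2\,\mathrm{Re}(a/z_1) + \psi$ with $\psi$ smooth pluriharmonic, so by Poincar\'e-Lelong
\begin{equation*}
c_1(\cL,h) = \frac{\ii}{2\pi}\ddb\varphi = \beta[B] + \frac{\ii}{2\pi}\ddb\bigl(2\,\mathrm{Re}(a/z_1)\bigr).
\end{equation*}
The coefficient $a$ corresponds to the local section $a\cdot\pt/\pt z_1|_B$ of the normal bundle $N_{B|Z}$, and checking its transformation law under coordinate changes preserving $B$ shows that these local sections patch into a global holomorphic section $v\in H^0(B,N_{B|Z})$.

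The main technical obstacle is to verify the current identity $\frac{\ii}{2\pi}\ddb\bigl(2\,\mathrm{Re}(a/z_1)\bigr) = T_v$. I would test against a smooth compactly supported form $\phi$ of the appropriate degree and apply Stokes' theorem on the complement of the tubular neighborhood $T_\varepsilon=\{|z_1|<\varepsilon\}$. Since $\mathrm{Re}(a/z_1)$ is pluriharmonic off $B$, the volume integral vanishes, and everything reduces to boundary integrals on $\pt T_\varepsilon$; an explicit polar computation in the normal coordinate $z_1$, letting $\varepsilon\to 0$, identifies the limit with $\int_B\mathrm{Re}(v)\llcorner d\phi = S_v(d\phi) = T_v(\phi)$. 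Exactness $T_v = dS_v$ then follows immediately from the definition. The delicate point is the careful extraction of the normal contraction $\mathrm{Re}(v)\llcorner$ from the boundary limits without additional pluriharmonic contamination from $\psi$.
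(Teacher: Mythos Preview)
Your treatment of the curvature current is sound and matches the paper's Lemma~8.2: the $L^1_{\rm loc}$ condition on the weight forces $\log f - \beta\log z_1$ to have at most a simple pole in $z_1$, and the distributional identity $\frac{\ii}{2\pi}\ddb\bigl(2\,\mathrm{Re}(a/z_1)\bigr)=T_v$ is precisely what the paper establishes (they use the Cauchy formula $\frac{\pt\varphi}{\pt z_1}(0)=-\int\frac{1}{\pi z_1}\frac{\pt^2\varphi}{\pt\bar z_1\pt z_1}\,dA$ rather than your tubular limit, but the two are equivalent). The patching of the local data $a(z')\,\pt/\pt z_1$ into a global holomorphic section of $N_{B|Z}$ is also the same argument.

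Your derivation of the algebraic decomposition, however, contains a genuine gap. You assert that for suitable $k$ the section $\sigma^{\otimes k}$ extends \emph{meromorphically} across $B$, so that $\cL^{\otimes k}\otimes\cO_Z(-mB)$ acquires a global holomorphic section of constant modulus. But locally $f = z_1^{\beta}\,e^{a(z')/z_1}\cdot(\text{unit})$, so $f^k$ still carries the factor $e^{ka(z')/z_1}$, which has an \emph{essential} singularity along $\{z_1=0\}$ whenever $a\not\equiv 0$. Hence $\sigma^{\otimes k}\otimes s^{-m}$ does not extend across $B$, and the triviality of $\cL^{\otimes k}\otimes\cO_Z(-mB)$ cannot be read off from it. (Your ``rational correction coming from the flat factor'' is also muddled: since $f=\sigma/\tau$ is single-valued on $U\setminus B$, its winding number is already an integer and there is nothing to clear by tensor powers.)

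The paper avoids touching $\sigma$ altogether. It works directly with the local potentials $\chi_i=-\log\|e_i\|_h^2$ and the transition functions $g_{ij}$ of $\cL$: writing each $\chi_i=\beta_i\log|z_i|^2+2\,\mathrm{Re}(f_i/z_i)$ and using $\log|g_{ij}|^2=\chi_j-\chi_i$, the holomorphy and non-vanishing of $g_{ij}$ along $B$ force $\beta_i=\beta_j=:\beta$, $f_j=\gamma_{ji}f_i$ (the normal-bundle gluing), and $|g_{ij}|^2=|\gamma_{ji}|^{2\beta}$ where $\gamma_{ij}$ are transition functions for $\cO_Z(B)$. From the last identity one reads off $c_{1,\mathbb R}(\cL)=-\beta\,c_{1,\mathbb R}(\cO_Z(B))$, hence $\beta\in\mathbb Q$, and after clearing the denominator of $\beta$ the quotients $g_{ij}^m\gamma_{ij}^{-m\beta}$ form a $U(1)$-valued cocycle --- the flat factor $L$. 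The essential singularity of $\sigma$ along $B$ never enters because only the \emph{moduli} of transition functions are compared. If you want to salvage your own route, you should drop the meromorphic-extension claim and instead deduce flatness of $\cL\otimes\cO_Z(-\beta B)$ from the cohomological identity $c_1(\cL)=\beta\,c_1(\cO_Z(B))$ that your curvature formula already gives.
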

Our main application (cf.\ Theorem~\ref{th:main}) concerns the case, where $Z$ is a projective manifold:
\begin{theorem*}
Let $\lambda(\xi)$ be the determinant line bundle on $\DD$. Let $\tau:\wt\DD \to \DD$ be a modification  of the locus of singular fibers as above. Then there exists a holomorphic line bundle $L$, whose restriction to $\wt\DD\backslash B$ is trivial, together with real numbers $\wt\beta_i$ and $m\in \mathbb N$ such that
$$
\lambda^{\otimes m}_{\wt\DD}= \tau^*\lambda(\xi^{\otimes m}) \otimes \cO_{\wt\DD}\left(\sum \wt\beta_j\cdot B_i\right) L.
$$
The line bundle $L$ pulled back to a desingularization $\mu:\wh\DD\to\wt\DD$  of the logarithmic pair $(\wt\DD,B)$ is flat.
The Chern current of $(\lambda(\xi),h^Q_\xi)$ satisfies
    $$
    \omega^{WP}_{\wh\DD} = \mu^* \tau^*c_1(\lambda(\xi),h^Q_\xi)+ \sum \gamma_i[\wh B_i],\; \gamma_i\geq 0 ,
    $$
where the $[\wh B_i]$ denote the currents of integration along the components $\wh B_i$ of the snc divisor $\mu^*(B)$.
\end{theorem*}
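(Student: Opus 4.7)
The plan is to compare the two line bundles on $\wt\DD$ that both extend the determinant line bundle from the open set $\DD'\subset\wt\DD\backslash B$ of smooth fibers. On the one hand there is the pullback $\tau^*\lambda(\xi)$ equipped with the Quillen metric $\tau^*h^Q_\xi$, well-defined as a singular hermitian metric by the discussion preceding the theorem. On the other hand the construction of \cite{sch:inv12,sch:arxiv} produces an extension $\lambda_{\wt\DD}$ whose curvature current realizes the \wp current $\omega^{WP}_{\wt\DD}$. Over $\wt\DD\backslash B$ both line bundles agree with the Bismut–Gillet–Soulé determinant line bundle, and their Quillen/WP metrics coincide there. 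Hence after passing to a suitable tensor power $m$ (to clear the rational denominator produced by Theorem~\ref{th:flat}) the line bundle
$$
\cL := \lambda_{\wt\DD}^{\otimes m}\otimes \tau^*\lambda(\xi^{\otimes m})^{-1}
$$
is trivial as a hermitian holomorphic bundle on $\wt\DD\backslash B$.

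Next I would invoke Theorem~\ref{th:flat} componentwise along the snc divisor $B=\sum B_i$. At a generic point of each smooth stratum $B_i$ the theorem applies directly, producing a rational number $\wt\beta_i$ and a local structural identity $\cL\simeq\cO_{\wt\DD}(\wt\beta_i B_i)\otimes L_i$ with $L_i$ flat on the complement, together with the current identity
$$
c_1(\cL,h)\big|_{\text{nbhd of }B_i} = \wt\beta_i[B_i]+T_{v_i},
$$
where $T_{v_i}$ is the exact current associated to a holomorphic section $v_i$ of $N_{B_i|\wt\DD}$. Assembling the local descriptions on $\wt\DD$ and pulling back to the log-resolution $\mu:\wh\DD\to\wt\DD$ yields the asserted decomposition
$$
\lambda^{\otimes m}_{\wt\DD}=\tau^*\lambda(\xi^{\otimes m})\otimes\cO_{\wt\DD}\!\left(\sum\wt\beta_i B_i\right)\otimes L,
$$
with $\mu^*L$ trivial outside the snc divisor $\mu^*B=\sum\wh B_i$. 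Flatness of $\mu^*L$ then follows from the fact that, after subtracting off the divisor contributions, the curvature of its induced hermitian metric is a current supported on the snc divisor but with no mass on it once the $T_{v_i}$-terms have been removed.

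The heart of the argument, and the main obstacle, is the vanishing of the exact currents $\mu^*T_{v_i}$. Here I would slice: take a smooth curve $C\subset\wh\DD$ meeting $\wh B_i$ transversally at a single point in the smooth locus of $\mu^*B$ and disjoint from the remaining components. Restricting the family $\cX\to\wh\DD$ to $C$ gives a one-parameter degeneration of compact subvarieties with a single singular fiber, precisely the situation addressed by Yoshikawa's theorem \cite{yo}. Yoshikawa's asymptotic formula identifies the Chern current of the Quillen metric on $C$ as a smooth form plus a Dirac current with an explicit nonnegative coefficient, leaving no room for an additional exact term of the form $T_{v_i}|_C$. Since the section $v_i$ is determined by its restriction to a dense set of such transverse curves, $\mu^*T_{v_i}=0$.

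Having eliminated the $T_v$-terms, the Chern current identity reduces after division by $m$ to
$$
\mu^*\tau^*c_1(\lambda(\xi),h^Q_\xi) = \omega^{WP}_{\wh\DD} - \sum\gamma_i[\wh B_i],\qquad \gamma_i:=\wt\beta_i/m,
$$
which is the claimed formula. Positivity $\gamma_i\geq 0$ follows because $\omega^{WP}_{\wh\DD}$ is a positive current extending $\omega^{WP}_{\DD'}$, while $\mu^*\tau^*c_1(\lambda(\xi),h^Q_\xi)$ agrees with the same smooth form on the open dense set $\wh\DD\backslash\mu^*B$; any distributional difference supported on the snc divisor must therefore have nonnegative Lelong numbers along each $\wh B_i$. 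The most delicate point is the curve-slicing step: identifying Yoshikawa's leading singular term with $\wt\beta_i[B_i]$ and ruling out any normal-bundle correction requires careful matching of the Quillen-metric asymptotics in \cite{yo} with the exact current $T_{v_i}$ produced by Theorem~\ref{th:flat}.
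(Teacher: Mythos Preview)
Your outline matches the paper's proof closely: form the quotient bundle $\cL=\lambda_{\wt\DD}^{\otimes m}\otimes\tau^*\lambda(\xi^{\otimes m})^{-1}$, apply Theorem~\ref{th:flat} along each component of $B$ to obtain the divisorial and $T_v$ contributions, then slice by curves transverse to the boundary and use Yoshikawa's result (in the form of Proposition~\ref{pr:cont} and Proposition~\ref{pr:comphQ}) to see that $-\log h^Q_\xi$ acquires only logarithmic poles on such curves, which rules out the simple-pole terms $\mathrm{Re}(a_{-1}/z)$ and hence kills the $T_v$ currents. This is exactly the paper's strategy.

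The one point where your argument does not hold up is the justification of $\gamma_i\geq 0$. You claim that since $\omega^{WP}_{\wh\DD}$ is positive and agrees with $\mu^*\tau^*c_1(\lambda(\xi),h^Q_\xi)$ off the divisor, the difference supported on $\sum\wh B_i$ must have nonnegative Lelong numbers. This does not follow: $c_1(\lambda(\xi),h^Q_\xi)$ is \emph{not} known to be a positive current (the paper says so explicitly after Proposition~\ref{pr:comphQ}), and Lelong numbers are not meaningful for general real currents, so nothing a priori forbids negative coefficients. In the paper the sign is obtained from the curve computation itself: by Proposition~\ref{pr:comphQ} the canonical morphism $\uu R f_*\xi\to\uu R\wt f_*(\mu^*\xi)$ identifies $\lambda_f(\xi)$ with $\lambda_{\wt f}(\mu^*\xi)(-D)$ for an \emph{effective} divisor $D$, and Corollary~\ref{co:wp} then reads $\omega^{WP}_C=c_1(\lambda_f(\xi),h^Q_\xi)+[D]$ with nonnegative multiplicities. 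Running this over curves transverse to each $\wh B_i$ gives $\gamma_i\geq 0$.
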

The proof requires Yoshikawa's theorem about the singularities of Quillen metrics \cite{yo} for families over curves with smooth total space, which in our case allows logarithmic poles for $-\log h^Q_\xi$ but no simple poles along the divisor $B$ so that the current $T_v$ from the previous theorem is not present.

\section{Infinitesimal theory of Douady spaces -- Notions}
Let $(X,\cO_X)$ be a complex space and $\cF$ a coherent $\cO_X$-module. We will use the notion of a {\em small extension} of $\cO_X$ by $\cF$ in the sense of sheaves of analytic algebras. A small extension is defined by an exact sequence
\begin{equation}\label{eq:smext}
0 \to \cF \stackrel{\alpha}{\longrightarrow}\cO_Y \stackrel{\mu}{\longrightarrow} \cO_X \to 0,
\end{equation}
such that $(X,\cO_Y)$ is a complex space and
\begin{equation}\label{eq:defsmext}
a\cdot \alpha(m) = \alpha(\mu(a)\cdot m) \text{ for all } m\in \cF, a \in \cO_Y   .
\end{equation}
Equivalence classes of small extensions are defined by isomorphisms of short exact sequences \eqref{eq:smext} that are equal to the identity on $\cF$ and $\cO_X$.

For $\cF=\cO_X$ such equivalence classes are known to correspond to the space of isomorphisms classes of infinitesimal deformations i.e.\ deformations of $(X,\cO_X)$ over the double point $(\{0\},\C[t]/(t^2))$.

Let $X \subset Z$ be an embedding between (compact) complex manifolds, and $\cO_X= (\cO_Z/\cI_X)|X$. The following defines a small extension:
$$
0 \to \cI_X/\cI^2_X \to \cO_Z/\cI^2_X \to \cO_X \to 0.
$$
Then for any $\cO_X$-linear map $\psi:\cI_X/\cI^2_X \to \cO_X$ the fibered sum
\begin{equation}\label{eq:fibs}
\xymatrix{ \cI_X/\cI^2_X \ar[r]\ar[d]_\psi & \cO_Z/\cI^2_X \ar[d]\\
\cO_X \ar[r] & \cO_\cX= \cO_X \oplus_{\cI_X/\cI^2_X} (\cO_Z/\cI^2_X)  }
\end{equation}
defines a small extension of the form
\begin{equation}\label{eq:smextembx}
\xymatrix{
0  \ar[r]& \cO_X \ar[r]^\alpha&\cO_\cX \ar[r]^\mu& \cO_X \ar[r]& 0\\
& & & \cO_Z \ar[u]\ar[lu] &
}
\end{equation}
i.e.\ an infinitesimal deformation of the embedding $X\hookrightarrow Z$ with $Z$ fixed.

Conversely given a small extension of type \eqref{eq:smextembx}, the map $\cO_Z\to \cO_X$ gives rise to a morphism $\psi:\cI_X/\cI^2_X \to \cO_X$, which in turn determines $\cO_\cX$ as a fibered sum as in \eqref{eq:fibs}.

Altogether we have an identification of the space of infinitesimal deformations with $H^0(X, \cH om_{\cO_X}(\cI_X/\cI^2_X,\cO_X))$, which is equal to the space $H^0(X,\mathcal N_{X|Z})$ of holomorphic sections of the normal bundle.

In \cite{do} Douady provided the set of compact, complex analytic subspaces of a given complex space $Z$ with a natural complex structure. More precisely, he considered the functor that assigns to any complex space $S$ the set of embedded, proper, flat holomorphic families
\begin{equation}\label{eq:doudi}
\xymatrix{\cY \ar@{^{(}->}^i[r] \ar[d]_f & Z \times S \ar[dl]^{\mathrm{pr}_2} \\  S &}
\end{equation}
(where $\mathrm{pr}_2$ denotes the projection onto the second factor).
The functor assigns to any holomorphic map $R \to S$ the pullback of a given family over $R$:
$$
\xymatrix{\cY_R := \cY \times_S R \ar[d] \ar[r] & \cY \ar[d]^f\\ R \ar[r] & S}
$$
According to Douady's theorem the above functor is representable by a complex space, the {\em Douady space}.

Of particular interest is the situation, where $(Z,\omega_Z)$ is a compact \ka manifold, and $\DD_Z$ the Douady space of compact subspaces of $Z$. Here, we consider this functor on the category of reduced complex spaces, and all parameter spaces including the Douady space itself will always be a reduced complex spaces. For our later purposes it turns out that it is sufficient to study {\em normal} parameter spaces.

Given a family \eqref{eq:doudi} and a point $s_0\in S$ with smooth fiber $X=\cY_{s_0}=f^{-1}(s_0)$, the above argument defines the \ks map
$$
\rho_{s_0}: T_{s_0}S \to H^0(X,\mathcal N_{X|Z}).
$$
The \ks map can be computed as follows: Since the normal bundle $\cN_{X|\cY}$ is trivial, we get
\begin{equation}\label{eq:Normalbundles}
\xymatrix{0 \ar[r] & \cT_{X} \ar[r]\ar@{=}[d] & \cT_\cY|X
\ar[r]^{\nu_\cY}\ar[d]_{q_*}& T_{s_0}S\otimes_\C \cO_X
\ar[r]\ar[d]_\rho & 0\\0 \ar[r] & \cT_{X}  \ar[r] &\cT_Z|X
\ar[r]^{\nu_Z} & \cN_{X|Z} \ar[r]& 0}
\end{equation}
where $\cT_X$ etc.\ denote the sheaves of holomorphic vector fields, and   $q=\mathrm{pr}_1\circ i$. Now the morphism $\rho$ applied to global sections defines the \ks map $\rho_{s_0}$.

\section{Geodesic curvature and \wp form}\label{se:gecuwp}
The notion of a (generalized) \wp form for holomorphic families is closely related to the existence of distinguished \ka metrics on the fibers. The original definition by A.~Weil for compact Riemann surfaces uses the Poincaré metrics on the fibers: The corresponding $L^2$-inner product is defined in terms of harmonic \ks tensors. For families of canonically polarized manifolds the unique \ke metrics of constant Ricci curvature equal to $-1$ is taken, and again the $L^2$-inner product of harmonic \ks tensors yields a \ka metric on the parameter space.

It turned out that for (effectively parameterized) holomorphic families $f:\cX \to S$ of canonically polarized manifolds the induced hermitian metric on the relative canonical bundle $\cK_{\cX/S}$ is strictly positive with curvature form $\omega_\cX$ such that for all fibers $\cX_s=f^{-1}(s)$, $s\in S$ the restrictions $\omega_\cX|\cX_s$ are the original \ke metrics on the fiber \cite{sch:inv12}.

We note that by definition a \ka form on a reduced complex space has local $\ol\pt\pt$-potentials of class $\cinf$ given locally on a smooth ambient space. By definition it is positive definite on all Zariski tangent spaces. The pair $(\cX,\omega_\cX)$ will be called a {\it \ka space}.

In such a situation the notion of the (generalized) {\em geodesic curvature} is meaningful. We will describe the general approach.

Let $f:\cX \to S$ be a proper, smooth, holomorphic map of reduced spaces, and $\omega_\cX$ a \ka form.

Let $s^i$ be coordinates on a locally given smooth ambient space of $S$ near a given point $s_0\in S$, and $z^\alpha$, $\alpha=1,\ldots,n$ coordinates on $\cX_{s_0}$ such that $(z,s)$ serve as local coordinates on $\cX$ with $f(z,s)=s $. Let
$$
\omega_\cX = \ii\left(g_{i\ol\jmath}ds^i\we ds^{\ol\jmath} + g_{\alpha\ol\jmath}dz^\alpha\we ds^{\ol\jmath} + g_{i\ol\beta} ds^i\we dz^{\ol\beta} + g_{\alpha\ol\beta}dz^\alpha\we dz^{\ol\beta}  \right)
$$
\begin{definition}\label{de:canli}
Let $\pt/\pt s^i|_{s_0}\in T_{s_0}S$ be a tangent vector. Let
$$
v_i = \frac{\pt}{\pt{s^i}} + a^\alpha_i \frac{\pt}{\pt z^\alpha}
$$
be the differentiable vector field on $\cX_{s_0}$ with values in $\cT_\cX$ that is perpendicular to the fiber $\cX_{s_0}$ with $f_*(v_i) = \pt/\pt s^i$. Then $v_i$ is called the horizontal lift of $\pt/\pt s^i$.
\end{definition}
Note that a simple calculation shows
\begin{equation}\label{eq:aalphai}
g_{\alpha\ol\beta} a^\alpha_i = - g_{i\ol\beta}.
\end{equation}
\begin{definition}\label{se:geocurv}
The pointwise inner product of horizontal lifts with respect to $\omega_\cX$
$$
\varphi_{i\ol\jmath} = \langle v_i,\\v_j\rangle_{\omega_\cX}
$$
is called geodesic curvature.
\end{definition}
The following equation holds
$$
\varphi_{i\ol\jmath} = g_{i\ol\jmath} - g_{\alpha\ol\beta}a^\alpha_i a^{\ol\beta}_{\ol\jmath}.
$$
\begin{remark}
For $s_0\in S$ the geodesic curvature is a  family of hermitian inner products on $T_{s_0}S$ depending in a $\cinf$ manner on the points of the fiber $\cX_{s_0}$.
\end{remark}

\begin{remark}
	Both the definition of a horizontal lift of a tangent vector, and the definition of the geodesic curvature, are meaningful on the smooth locus of a fiber $\cX_s$.
\end{remark}
Note that for families of compact (punctured) Riemann surfaces and canonically polarized varieties the \wp form
$$
\omega^{WP}= \ii G^{WP}_{i\ol\jmath}(s)\; ds^i\we ds^{\ol\jmath}
$$
can be computed in terms of the geodesic curvature (cf.\ \cite{sch:inv12}):
$$
G^{WP}_{i\ol\jmath} = \int_{\cX_s} \varphi_{i\ol\jmath}(z,s)\; \omega^n_{\cX_s},\qquad n=\dim{\cX_s}.
$$
For differential forms $\eta$, and $m\in \mathbb N$ we set $\eta^m=(1/m!)\eta\we\ldots\we \eta$.

We have
$$
\omega^{n+1}_\cX =  \varphi_{i\ol\jmath}\ii ds^i \we ds^{\ol\jmath}\we \omega^n_\cX,
$$
up to contributions whose integrals along fibers of $f$ vanish.

\begin{proposition}[\cite{sch:inv12,b-s,a-s}]\label{pr:fibint}
Let $f:\cX \to S$ b e a proper, smooth holomorphic map with fiber dimension $n$, and $\omega_\cX$ be any \ka form $\omega_\cX$. Define $\omega^{WP}$ in terms of the geodesic curvature as above. Then a fiber integral formula
\begin{equation}\label{eq:fibint0}
  \omega^{WP}= \int_{\cX/S} \omega^{n+1}_{\cX}
\end{equation}
holds.
In particular $\omega^{WP}$ is a closed form on $S$.
\end{proposition}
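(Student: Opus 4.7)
The plan is to decompose $\omega_\cX$ into a pure base part and a pure fiber part using the horizontal lift frame, and then expand $\omega_\cX^{n+1}$ combinatorially so that only one fiber-integrable term survives. Concretely, working in local coordinates $(z^\alpha,s^i)$ on $\cX$ with $f(z,s)=s$, let $v_i=\partial/\partial s^i+a^\alpha_i\,\partial/\partial z^\alpha$ be the horizontal lifts of Definition~\ref{de:canli}, and set $\theta^\alpha:=dz^\alpha-a^\alpha_i\,ds^i$, so that $(ds^i,\theta^\alpha)$ is the dual coframe to $(v_i,\partial/\partial z^\alpha)$. The first step is to establish the decomposition
$$
\omega_\cX \;=\; \sqrt{-1}\,\varphi_{i\ol\jmath}\,ds^i\we ds^{\ol\jmath}\;+\;\sqrt{-1}\,g_{\alpha\ol\beta}\,\theta^\alpha\we\theta^{\ol\beta}.
$$
One verifies this by substituting $dz^\alpha=\theta^\alpha+a^\alpha_i\,ds^i$ into the coordinate expression of $\omega_\cX$ and collapsing cross terms via \eqref{eq:aalphai} and its conjugate $g_{\alpha\ol\beta}a^{\ol\beta}_{\ol\jmath}=-g_{\alpha\ol\jmath}$; the remaining coefficient of $ds^i\we ds^{\ol\jmath}$ reassembles to $g_{i\ol\jmath}+g_{i\ol\beta}a^{\ol\beta}_{\ol\jmath}+g_{\alpha\ol\jmath}a^\alpha_i+g_{\alpha\ol\beta}a^\alpha_i a^{\ol\beta}_{\ol\jmath}$, which is exactly the full expansion of $\langle v_i,v_j\rangle_{\omega_\cX}=\varphi_{i\ol\jmath}$.

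Writing $\eta_1:=\sqrt{-1}\,\varphi_{i\ol\jmath}\,ds^i\we ds^{\ol\jmath}$ and $\eta_2:=\sqrt{-1}\,g_{\alpha\ol\beta}\,\theta^\alpha\we\theta^{\ol\beta}$, the paper's convention $\eta^m=\eta^{\we m}/m!$ gives $\omega_\cX^{n+1}=\sum_{k=0}^{n+1}\eta_1^k\we\eta_2^{n+1-k}$. Since $\eta_2$ is a $(1,1)$-form purely in the fiber directions and $\eta_1$ has no fiber component, the fiber degree of $\eta_1^k\we\eta_2^{n+1-k}$ equals $2(n+1-k)$. Only the top fiber degree $2n$ contributes to the fiber integral: for $k=0$ the form has fiber degree $2(n+1)>2n=\dim_\R\cX_s$ and vanishes pointwise, while for $k\geq 2$ the fiber degree drops below $2n$. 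Thus the only surviving contribution is $\eta_1\we\eta_2^n$. Because $\theta^\alpha$ restricts to $dz^\alpha$ on each fiber $\cX_s$ (the forms $ds^i$ vanish there), $\eta_2|_{\cX_s}=\omega_{\cX_s}$ and $\eta_2^n|_{\cX_s}=\omega_{\cX_s}^n$ is the fiber volume form. Therefore
$$
\int_{\cX/S}\omega_\cX^{n+1} \;=\; \int_{\cX/S}\eta_1\we\eta_2^n \;=\; \sqrt{-1}\,ds^i\we ds^{\ol\jmath}\int_{\cX_s}\varphi_{i\ol\jmath}\,\omega_{\cX_s}^n \;=\; \omega^{WP}.
$$

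Closedness is then immediate: $\omega_\cX$ is closed (being \ka), so $d\omega_\cX^{n+1}=0$, and fiber integration along the proper smooth submersion $f$ commutes with $d$, whence $d\omega^{WP}=\int_{\cX/S}d\omega_\cX^{n+1}=0$. The only delicate point is the decomposition in the first step: the cross terms of mixed base–fiber index type such as $g_{\alpha\ol\jmath}a^\alpha_i$ do not individually simplify via \eqref{eq:aalphai}, but their combination with $g_{i\ol\beta}a^{\ol\beta}_{\ol\jmath}$ and $g_{\alpha\ol\beta}a^\alpha_i a^{\ol\beta}_{\ol\jmath}$ reassembles exactly into $\varphi_{i\ol\jmath}-g_{i\ol\jmath}$, after which the rest of the argument is pure combinatorial bookkeeping.
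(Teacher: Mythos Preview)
Your proof is correct and follows exactly the approach the paper indicates: immediately before the proposition the paper records the key identity $\omega^{n+1}_\cX = \varphi_{i\ol\jmath}\,\sqrt{-1}\, ds^i\we ds^{\ol\jmath}\we \omega^n_\cX$ up to terms whose fiber integrals vanish, and otherwise defers to the cited references. Your explicit block decomposition $\omega_\cX=\eta_1+\eta_2$ in the horizontal coframe $(ds^i,\theta^\alpha)$ and the binomial expansion simply make that identity, and the vanishing of the extra terms, transparent; one small terminological caution is that $\eta_2$ is ``purely vertical'' only in the $\theta$-coframe, but your degree count is still valid because the pulled-back forms $ds^i$ are frame-independent, so for $k\geq 2$ the term $\eta_1^k\we\eta_2^{n+1-k}$ carries at least $2k>2$ factors of $ds$ in any frame and hence has vanishing fiber integral.
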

Note that the above formula holds for (reduced) singular base spaces. In fact, only the first infinitesimal neighborhood of a point in $S$ matters for the proof.

A fiber integral like \eqref{eq:fibint0} already appeared in the algebraic context of \cite{an}. Later it appeared in \cite{v1,v2,f-s}, whereas  in \cite{campana,f1} a more general situation was considered.

In \cite{b-s,a-s} we introduced the approach via geodesic curvatures to Douady spaces, more precisely to the open subspaces with smooth fibers.

In a sequence of articles \cite{f0,f1,f2}, Fujiki studied Douady and Barlet spaces. We will need the following version of {\cite[Theorem 5.3]{f1}}:
\begin{theorem}[Fujiki]
Let $(Z,\omega_Z)$ be a compact \ka manifold. Then all irreducible components $\DD_\alpha$ of the Douady space $\DD_Z$ are compact.
\end{theorem}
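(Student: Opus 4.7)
The plan is to reduce the compactness of an irreducible component $\DD_\alpha$ to a Bishop-type compactness theorem for analytic cycles of bounded volume in a compact \ka manifold, and then to lift that compactness from the cycle space back to the Douady space via the associated-cycle morphism.

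First I would establish a uniform volume bound for the fibers of the universal family over $\DD_\alpha$. For any compact analytic subspace $X\subset Z$ of pure dimension $k$, Wirtinger's identity gives
$$
\mathrm{Vol}(X)=\frac{1}{k!}\int_X \omega_Z^k = \bigl\langle [X],[\omega_Z]^k\bigr\rangle,
$$
so the volume depends only on the homology class $[X]\in H_{2k}(Z,\mathbb Z)$. For a flat family as in \eqref{eq:doudi} with connected reduced base $S$ the Hilbert polynomial and hence the fundamental class of the fiber are locally constant in $s\in S$. Applied to the universal family over the (irreducible, hence connected) component $\DD_\alpha$, this gives a common fiber dimension and a single value $V_\alpha$ for the \ka volume of every fiber.

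Next I would invoke the Bishop--Lieberman--Fujiki compactness theorem: in a compact \ka manifold $(Z,\omega_Z)$ the set of compact analytic cycles of prescribed dimension and mass bounded by a fixed constant is compact in the topology of currents of integration. Together with the previous step this implies that the image of $\DD_\alpha$ under the natural associated-cycle morphism $c:\DD_Z\to \cB(Z)$ into the Barlet cycle space is contained in a compact subset. Since $c(\DD_\alpha)$ is analytic, it is compact.

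The final and most delicate step is to transfer compactness from $\cB(Z)$ back to $\DD_Z$ itself. Here I would appeal to Fujiki's properness of $c$ on irreducible components: a limit cycle admits only a finite-dimensional, bounded family of analytic refinements (non-reduced or embedded scheme structures) compatible with the fixed Hilbert polynomial inherited from $\DD_\alpha$. The hard part is precisely this: one must rule out that a sequence $s_n\in \DD_\alpha$ with convergent cycles $c(s_n)$ escapes in the Douady topology through wilder and wilder scheme structures on a fixed limit support. Flatness across the limit and uniqueness of the flat limit (together with the constancy of the Hilbert polynomial) confine the possible scheme structures to a proper analytic subspace of $\DD_Z$ over each limit cycle, which yields properness of $c|_{\DD_\alpha}$. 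Given this, compactness of $c(\DD_\alpha)$ and closedness of $\DD_\alpha$ in $\DD_Z$ imply that $\DD_\alpha$ itself is compact.
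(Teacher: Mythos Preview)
The paper does not prove this theorem at all; it is quoted from Fujiki's work (explicitly attributed as \cite[Theorem~5.3]{f1}) and used as a black box. There is therefore no ``paper's own proof'' to compare your proposal against.

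As a sketch of Fujiki's original argument your outline is broadly on the right track: constancy of the fiber volume via Wirtinger, Bishop--Lieberman compactness on the cycle side, and properness of the Douady--Barlet morphism $c$. Two caveats. First, the appeal to a ``Hilbert polynomial'' is misplaced in the general compact \ka setting; there is no ample line bundle to define one, and the correct invariant is simply the homology class of the fiber, which is locally constant by flatness. Second, the last step --- properness of $c$ restricted to a component --- is exactly the substantial content of Fujiki's papers \cite{f0,f1,f2}, and your paragraph does not really engage with it. Invoking ``uniqueness of the flat limit'' and ``constancy of the Hilbert polynomial'' is circular here: flat limits in the Douady space are unique only once one already knows the relevant family is proper over the base, and the Hilbert polynomial is unavailable. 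Fujiki's actual argument uses his structure theory for compact complex spaces in class~$\mathcal C$ and a careful analysis of the fibres of $c$; this cannot be replaced by a one-sentence appeal to flatness.
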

Let $(Z,\omega_Z)$ be a compact \ka manifold, and $\DD\subset \DD_Z$ an irreducible component of the (reduced) Douady space, such that the locus $\mathbf{D'}\subset \DD$ of points with smooth fibers is non-empty. Denote by $n$ the dimension of the fibers $\cX_s$ for $s\in \DD$. We consider the universal family
\begin{equation}\label{eq:embfam}
\xymatrix{\cX'  \ar@{^{(}->}^{i'}[r] \ar[d]_{f'} & \cX \ar@{^{(}->}^i[r] \ar[d]_f & Z \times \DD \ar[dl]^{\mathrm{pr}_2} \\ \mathbf{D'} \ar@{^{(}->}[r] & \DD &}
\end{equation}
Set $\omega_\cX= i^* \mathrm{pr}_1^*\omega_Z$, where $\mathrm{pr}_1: Z\times \DD \to Z$ is the projection. The forms $\omega_\cX$ and $\omega_\cX^{n+1}$ locally are restrictions of $\cinf$ forms on smooth ambient spaces. With this setting Proposition~\ref{pr:fibint} holds for the \wp form $\omega^{WP}_{\mathbf{D'}}$ on $\mathbf{D'}$:

The form $\omega_\cX$ need not be strictly positive in all directions, but only semi-positive. However, since all fibers of $f$ are subspaces of $Z$, the form $\omega_\cX$  is strictly positive when restricted to fibers. This fact guarantees the  existence of horizontal lifts of tangent vectors at regular points of the fibers by \eqref{eq:aalphai}. However, the pointwise $\omega_\cX$-norms of horizontal lifts need not be bounded.  Now, if a $\omega^{WP}$\!-degenerate tangent vector with horizontal lift $v$ existed (over the regular locus of the fibers), the corresponding geodesic curvature would have to vanish identically at all smooth points of the fiber, because it has to be semi-positive everywhere. It follows from the construction that the projection $\mathrm{pr}_{1*}i_*: T_{\cX_s,x} \to T_sS $ applied to $v$ would vanish for all points  $x\in \cX_s$. Hence $i_*v(x)$ would be horizontal in $Z\times D$, which means that the given tangent vector would have a holomorphic lift, and the deformation would be infinitesimally trivial in the given direction. The above construction is compatible with base change. Altogether we note that the \wp form is strictly positive in the sense of currents at all points, whose fibers are reduced (or have at least one reduced irreducible component).

\section{The \wp current}\label{se:WPCurr}
We consider the universal family \eqref{eq:embfam} over an irreducible component $\DD$ of the Douady space $\DD_Z$.

In \cite[Theorem~2.4.1]{a-s} we extended the \wp form given on the space given on the smooth locus $\mathbf{D'}$ of $f$ to the whole of  $\DD$, and showed that the extended form $\omega^{WP}_\DD$ possesses locally plurisubharmonic, continuous $\pt\ol\pt$-potentials. We indicate briefly our argument, which is based upon a theorem by Varouchas.
\begin{theorem}[{\cite[Theorem 2]{v2}}]\label{th:VAthm2}
For any $s\in \DD$ there exists an open neighborhood $S\ni s$ and a differential form $\psi$ of degree $(n,n)$ on $f^{-1}(S)$ of class $\cinf$ (being the restriction of a differentiable form on a smooth ambient space) such that
$
\ddb \psi = \omega^{n+1}_\cX|f^{-1}(S).
$
\end{theorem}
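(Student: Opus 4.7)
The plan is to exploit the fact that $\omega_\cX = F^*\omega_Z$ with $F := \mathrm{pr}_1 \circ i : \cX \to Z$, so that local smooth \ka potentials of $\omega_Z$ on $Z$ pull back to local smooth \ka potentials of $\omega_\cX$ on $\cX$. By properness of $f$, the fiber $\cX_s$ is compact, and its image $F(\cX_s)\subset Z$ can be covered by finitely many open sets $U_1,\dots,U_N\subset Z$ on which $\omega_Z|U_j = \ddb\phi_j$ for smooth \ka potentials $\phi_j$. Setting $V := \bigcup_j U_j$, for $S\ni s$ small enough in $\DD$ one has $f^{-1}(S)\subset F^{-1}(V)$. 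Since $\omega_\cX^n$ is $d$-closed, a direct calculation yields
\[
\omega_\cX^{n+1} \;=\; \tfrac{1}{n+1}\,\ddb\bigl(F^*\phi_j\cdot\omega_\cX^n\bigr) \quad\text{on } F^{-1}(U_j),
\]
producing a smooth local $\ddb$-potential of $\omega_\cX^{n+1}$ of the correct bidegree on each set of the cover.

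To globalize, choose a smooth partition of unity $\{\chi_j\}$ on $V$ subordinate to $\{U_j\}$, set $\Phi := \sum_j \chi_j \phi_j$, and form the candidate
\[
\psi_0 \;:=\; \tfrac{1}{n+1}\, F^*\Phi\cdot \omega_\cX^n.
\]
Writing $\ddb\Phi = \omega_Z + E$ defines a smooth real $(1,1)$-form $E$ on $V$ depending on the derivatives of the $\chi_j$ and the differences $\phi_j-\phi_k$, and yields $\ddb\psi_0 = \omega_\cX^{n+1} + \frac{1}{n+1}\,F^*E\wedge\omega_\cX^n$. The key observation is that on each overlap $U_j\cap U_k$ the difference $\phi_j-\phi_k$ is pluriharmonic, so $\ddb(\phi_j-\phi_k)=0$; exploiting $\sum_j\chi_j = 1$ one verifies that on every $U_k$ the identity $E = \ddb\Theta_k$ holds with $\Theta_k := \sum_j \chi_j(\phi_j-\phi_k)$, and the transitions $\Theta_k-\Theta_l = \phi_l-\phi_k$ are again pluriharmonic on $U_k\cap U_l$.

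Thus the obstruction to globalizing the local $\ddb$-potentials of $E$ is a \v{C}ech $1$-cocycle with values in the sheaf of pluriharmonic functions on $V$. On a sufficiently refined Stein cover of $V$ this cocycle is a coboundary by the standard $\ddb$-lemma on Stein opens (equivalently, by the vanishing of the relevant Bott--Chern cohomology), producing a smooth global function $\Theta$ on $V$ with $\ddb\Theta = E$. The form
\[
\psi \;:=\; \psi_0 - \tfrac{1}{n+1}\, F^*\Theta\cdot \omega_\cX^n
\]
is then smooth of bidegree $(n,n)$ on $f^{-1}(S)$ and satisfies $\ddb\psi = \omega_\cX^{n+1}$; being built from pullbacks of smooth objects on $V\subset Z$ and restrictions of $\mathrm{pr}_1^*\omega_Z$ from $Z\times \DD$, it is automatically the restriction of a smooth form on a smooth ambient space. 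The principal difficulty lies in this last globalization step: a naive iteration of the partition-of-unity trick introduces correction terms of unbounded order, so one must close the loop by a \v{C}ech--Bott--Chern comparison on a Stein refinement rather than by a direct convergent scheme.
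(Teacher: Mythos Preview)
The paper does not give its own proof of this statement: it is quoted as \cite[Theorem~2]{v2} (Varouchas) and used as input for the construction of the Weil--Petersson current.

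Your proposed proof has a genuine gap at the globalization step. You assert that the \v{C}ech $1$-cocycle $\{\Theta_k-\Theta_l\}=\{\phi_l-\phi_k\}$ with values in the sheaf of pluriharmonic functions is a coboundary on $V$, appealing to ``the standard $\ddb$-lemma on Stein opens''. This is false. If that cocycle were a coboundary, there would exist pluriharmonic $h_k$ on $U_k$ with $\phi_l-\phi_k=h_l-h_k$, and then $\phi_k-h_k$ would glue to a global smooth function $\phi$ on $V$ with $\ddb\phi=\omega_Z|_V$. But $V$ contains the compact fiber $\cX_s\subset Z$ of positive dimension $n$, on which $\int_{\cX_s}\omega_Z^n>0$, so $\omega_Z|_V$ cannot be globally $\ddb$-exact. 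The $\ddb$-lemma on a Stein open $U_j$ only gives local solvability; it does not make $H^1$ of the sheaf of pluriharmonic functions on $V$ vanish, and that group contains the nonzero class of $\omega_Z|_V$.

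The underlying issue is that your reduction is too strong: you try to solve the scalar equation $\ddb\Theta=E$ on $V$ and then multiply by $\omega_\cX^n$, whereas what is actually needed is only that the $(n+1,n+1)$-form $\tfrac{1}{n+1}\,F^*E\wedge\omega_\cX^n$ be $\ddb$-exact by some smooth $(n,n)$-form on $f^{-1}(S)$. The correcting $(n,n)$-form need not be of the special shape $F^*(\text{function})\cdot\omega_\cX^n$. Varouchas's argument works directly at the level of $(p,p)$-forms, via a gluing scheme that uses the full freedom of modifying the local $(n,n)$-potentials by arbitrary $\ddb$-closed $(n,n)$-forms; that extra freedom is exactly what is lost in your scalar reduction.
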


We will be interested in the fiber integral/push forward of such an $(n,n)$-form $\psi$.  In the case that $Z$ is a projective space  Andreotti and Norguet showed in \cite{an} that the fiber integral of $\psi$ defines a continuous  function $\chi(s)$. A  more general case was treated by Stoll in \cite{st} (cf.\ also the results in \cite{f1}).

Our aim is to interpret the fiber integral
\begin{equation}\label{eq:fibint}
 \int_{\cX/\DD} \omega_\cX^{n+1}
\end{equation}
as \wp current $\omega^{WP}=\omega^{WP}_\DD$. By the above theorem it has a plurisubharmonic potential and hence can be pulled back under proper holomorphic maps.
\begin{proposition}[cf.\ {\cite[Theorem~2.4]{a-s}}]\label{pr:contpot}
The fiber integral \eqref{eq:fibint} defines a positive current with local, continuous potentials on each component $\DD$ of $\DD_Z$. The current will be called \wp current and denoted by $\omega^{WP}$. The construction is functorial, i.e.\ compatible with base change as long as the general fiber is smooth.
\end{proposition}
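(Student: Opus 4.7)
The plan is to combine Varouchas's theorem (Theorem~\ref{th:VAthm2}) with a push-forward/continuity argument. Fix $s\in \DD$ and an open neighborhood $S$ as in Theorem~\ref{th:VAthm2}, so that we have a smooth $(n,n)$-form $\psi$ on $f^{-1}(S)$, coming from a differentiable form on a smooth ambient space, with $\ddb\psi = \omega_\cX^{n+1}|f^{-1}(S)$. Set
$$
\chi(s) := \int_{\cX/\DD} \psi,
$$
the fiber integral of $\psi$ over $S$. Since $f$ is proper, the results of Andreotti-Norguet \cite{an} in the projective case, and of Stoll \cite{st} / Fujiki \cite{f1} in the general \ka case, show that $\chi$ is a continuous function on $S$. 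Because proper fiber integration commutes with $\ddb$, one has
$$
\ddb\chi = \int_{\cX/\DD}\ddb\psi = \int_{\cX/\DD}\omega_\cX^{n+1} = \omega^{WP}|S,
$$
exhibiting $\chi$ as a continuous local $\ddb$-potential for the fiber integral current.

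It remains to show that $\chi$ is plurisubharmonic, which will simultaneously give positivity of the current $\omega^{WP}$. By Section~\ref{se:gecuwp}, on the smooth locus $\mathbf{D'}\cap S$ the fiber integral coincides with the honest \wp form, which is semi-positive (and strictly positive in the sense described there). Hence $\chi$ is of class $\cinf$ and plurisubharmonic on $\mathbf{D'}\cap S$. By Fujiki's theorem the complement $\DD\backslash \mathbf{D'}$ is a proper analytic subset, hence pluripolar, so a continuous function that is \psh on the complement of a pluripolar set is \psh everywhere (this is the standard removable singularity statement for continuous \psh functions). Therefore $\chi$ is \psh on all of $S$, and the current $\ddb\chi = \omega^{WP}|S$ is positive.

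For the functoriality, let $R\to S$ be a holomorphic map between reduced base spaces such that the generic fiber of the pulled-back family is smooth, and let $\cY_R = \cY\times_S R$ as in \eqref{eq:doudi}. The form $\omega_{\cY_R}$ is the restriction of $\mathrm{pr}_1^*\omega_Z$, so it is the pullback of $\omega_\cY$. On the open locus of smooth fibers of $\cY_R \to R$, the classical equality of horizontal lifts and geodesic curvatures under base change (compatibility of \eqref{eq:Normalbundles} with pullback) shows that $\omega^{WP}_R$ agrees with the pullback of $\omega^{WP}_S$. The candidate potential on $R$ is the pullback of $\chi$, which is again continuous and \psh by the same argument; since both currents agree on a dense open subset and both have continuous local potentials, they coincide as currents on $R$.

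The main obstacle is the step asserting that a continuous function which is \psh on the complement of the (analytically thin) singular locus is \psh everywhere; one has to check that the continuous extension provided by Andreotti-Norguet/Stoll is compatible with the intrinsic definition of the \wp form given in Section~\ref{se:gecuwp} on $\mathbf{D'}$, i.e.\ that the two constructions agree on $\mathbf{D'}\cap S$ (which is where Proposition~\ref{pr:fibint} applies). Once this identification is in place, the removable singularity argument for continuous \psh functions across pluripolar sets yields plurisubharmonicity on all of $\DD$, and positivity follows at once.
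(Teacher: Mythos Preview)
Your argument is correct, and it shares the backbone of the paper's proof (Varouchas's $\ddb$-potential $\psi$, plus the Andreotti--Norguet/Stoll continuity of the fiber integral $\chi=\int_{\cX/\DD}\psi$). But the paper reaches both conclusions by somewhat different means.

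For positivity, the paper does not argue via plurisubharmonicity on $\mathbf{D'}$ plus a removable-singularity step. It simply observes that $\omega_\cX^{n+1}$ is a positive $(n+1,n+1)$-current on $\cX$ (since $\omega_\cX\geq 0$), and the proper pushforward of a positive closed current is positive; so $\omega^{WP}=f_*\omega_\cX^{n+1}\geq 0$ on all of $\DD$ in one line. This avoids appealing to the smooth locus at all and works even on components where $\mathbf{D'}$ might happen to be empty.

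For the continuous potential, the paper does not apply Varouchas directly to $\omega_\cX^{n+1}$. Instead it uses a ``K\"ahler trick'': locally on $U\subset\DD$ one adds $f^*\omega_U$ so that $\omega_\cX+f^*\omega_U$ is an honest K\"ahler form on $\cX_U$, and then Varouchas together with Andreotti--Norguet/Stoll yields a continuous local $\ddb$-potential for $\int_{\cX_U/U}(\omega_\cX+f^*\omega_U)^{n+1}$. Expanding (with the normalized power convention) gives $\omega^{WP}_U+\mathrm{vol}(\cX_s)\,\omega_U$; since the second term has smooth potentials, one subtracts it and concludes. This side-steps any worry about whether Varouchas's original theorem applies to the merely semi-positive form $\omega_\cX$.

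Your route---apply Theorem~\ref{th:VAthm2} as stated to $\omega_\cX^{n+1}$, then use continuity of $\chi$ and the removable-singularity theorem for continuous \psh\ functions---is valid given how the paper phrases Theorem~\ref{th:VAthm2}, and it is arguably more direct once that theorem is granted. The paper's approach has the advantage that positivity is obtained independently of the existence of smooth fibers, and the K\"ahler trick makes the appeal to Varouchas bullet-proof. One small point you flag as an ``obstacle'' is not really one: the identification of $\ddb\chi$ with $f_*\omega_\cX^{n+1}$ holds globally as currents because $f_*$ commutes with $\ddb$ for proper maps and $f_*\psi=\chi$ as a $0$-current (this is precisely what Stoll's continuity theorem, read together with Fubini, gives). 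The genuine removable-singularity step (continuous plus \psh\ off a thin set implies \psh) is standard and needs no further justification.
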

\begin{proof}
We know that the push forward of the current $\omega_\cX^{n+1}$ is positive in the sense of currents.

The claim being local with respect to the vase space $\DD$, we can replace $\DD$ by an open subset $U\subset \DD$, where a \ka form $\omega_U$ exists. We set ${\cX_U}= f^{-1}(U)$. With the notion of \eqref{eq:embfam} the form $\mathrm{pr}^*_1\omega_{Z}+ \mathrm{pr_2}^*\omega_U$ is a \ka form on $Z\times U$ so that $\omega_{\cX}|\cX_U+ f^* \omega_U$ is a \ka form on $f^{-1}(U)$. Now Theorem~\ref{eq:embfam} implies that the following current possesses continuous local $\ddb$-potentials.
\begin{eqnarray*}
\int_{\cX_U/U} (\omega_{\cX|U} + f^*\omega_U)^{n+1} &=& \int_{\cX_U/U} \omega_{\cX|U}^{n+1} + \int_{\cX_U/U} (\omega_{\cX|U}^{n} \we f^*\omega_U)\\& = & \omega^{WP}_U + \mathrm{vol}(\cX_s) \, \omega_U.
\end{eqnarray*}
The latter equality holds because of the existence of continuous $\ddb$-potentials. This shows the claim. The construction of the \wp current is clearly functorial in the stated sense -- here we need the fact that continuous local potentials for $\omega^{WP}$ exist so that pull-backs of the \wp current exist as long as points with smooth fibers are present.
\end{proof}

\section{Determinant Line Bundle}
\subsection{BGS-Theorem and Bott-Chern form}
For a proper,  holomorphic map $f: \cX \to S$ of complex spaces we use the notation $\uu{R} f_* : K_0^{\text{hol}}(\cX) \to K_0^{\text{hol}}(S)$ for the direct image functor in the derived category (extended to the Grothendieck group). We assume that $f$ is flat, and that $\xi$ is a locally free $\cO_X$ module. Then the Knudsen-Mumford determinant $(\det\uu{R} f_* \xi)^{-1}$ commutes with base change (\cite[p.\ 46]{k-m}). We denote the Knudsen-Mumford invertible sheaf by
$$
\lambda(\xi)= \left(\det\uu{R} f_* \xi\right)^{-1}.
$$
We will need the Quillen metric on determinant line bundles.  According to Bismut, Gillet, and Soul\'e \cite{bgs}, the Grothen\-dieck-Hir\-ze\-bruch-Riemann-Roch theorem holds, in the case of a proper, {\em smooth} family $f: \cX \to S$ over a smooth base space $S$, for distinguished differential forms in degree $2$, rather than cohomology classes, where $\cX$ carries a relative \ka structure $\eta_{\cX/S}$,  and $(\xi,h)$ a hermitian vector bundle. We will see that in our situation $\eta_{\cX/S}$ is only of auxiliary nature.
\begin{theorem}[{\cite{bgs}}]
Denote by ${\rm td}$ and ${\rm ch}$ resp.\ the Todd and Chern character forms. Then
\begin{equation}\label{eq:BGS}
c_1(\lambda(\xi),h^Q)=- \left[\int_{\cX/S} {\rm td}(\cX/S,\eta_{\cX/S})\cdot
{\rm ch}(\xi,h)\right]^{(1,1)}.
\end{equation}
\end{theorem}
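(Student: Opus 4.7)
The plan is to deduce the identity from a curvature computation involving Bismut's superconnection together with the local families index theorem. Recall that the Quillen metric $h^Q$ is built from the $L^2$-metric on $\det H^\bullet(\cX_s,\xi|_{\cX_s})$, obtained via fibrewise harmonic representatives with respect to the Laplacians $\Box_q=(\db+\dbs)^2$ acting on $\Lambda^{0,q}\otimes\xi$, multiplied by the Ray--Singer analytic torsion factor $\exp\!\bigl(-\tfrac12\sum_q(-1)^q q\,\zeta'_q(0)\bigr)$, where $\zeta_q(u)=\tr\bigl(\Box_q|_{(\ker\Box_q)^\perp}\bigr)^{-u}$ is the spectral zeta function. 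This torsion factor precisely cancels the discontinuities of the $L^2$-metric occurring where the dimensions of the cohomology groups jump, producing a smooth hermitian metric on $\lambda(\xi)$.

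The heart of the argument is a transgression computation on the infinite-rank bundle $\mathcal E=f_*(\Lambda^{0,\bullet}_{\cX/S}\otimes\xi)$ over $S$. Using the relative \ka{} form $\eta_{\cX/S}$ to split $\cT_\cX=\cT_{\cX/S}\oplus\cT^{\mathrm{hor}}$, one constructs Bismut's superconnection $\mathbb A_t$, whose leading piece is $\sqrt{t}\,(\db+\dbs)$, corrected by a natural connection $\nabla^{\mathcal E}$ and a curvature-of-horizontal-distribution term of order $1/\sqrt{t}$. Standard Chern--Simons arguments show that ${\rm str}(e^{-\mathbb A_t^2})$ is a closed differential form on $S$ whose cohomology class is independent of $t$, and the transgression identity
$$
\tfrac{d}{dt}{\rm str}(e^{-\mathbb A_t^2})=-d\,{\rm str}\bigl(\dot{\mathbb A}_t\,e^{-\mathbb A_t^2}\bigr)
$$
expresses the $t$-variation as a globally exact form on $S$.

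The identity then follows by comparing the two asymptotic regimes. The local families index theorem, proved by Getzler-type rescaling of $\mathbb A_t^2$ near $t=0$ combined with uniform heat-kernel asymptotics, yields
$$
\lim_{t\to 0^+}{\rm str}(e^{-\mathbb A_t^2})=\int_{\cX/S}{\rm td}(\cX/S,\eta_{\cX/S})\cdot {\rm ch}(\xi,h),
$$
while as $t\to\infty$ the supertrace concentrates on harmonic forms and tends to ${\rm ch}(Rf_*\xi,h^{L^2})$. Integrating the transgression formula between these two regimes and evaluating in bidegree $(1,1)$ by means of the Mellin transform produces precisely $\ddb\log\tau$, so that combining with the $L^2$ contribution one recovers the Chern form of $h^Q$ in degree two. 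The overall sign in the statement reflects the convention $\lambda(\xi)=(\det Rf_*\xi)^{-1}$, which flips the Chern character of the direct image to its negative.

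The hardest ingredient is the $t\to 0^+$ limit of ${\rm str}(e^{-\mathbb A_t^2})$: one needs uniform control of the heat kernel of an operator that simultaneously concentrates along the fibre diagonal and degenerates in the parameter $t$. The proof rests on delicate off-diagonal heat-kernel estimates combined with Getzler's Clifford rescaling, which converts the fibrewise supertrace into the Chern--Weil representative of ${\rm td}\cdot{\rm ch}$ integrated over the fibre. Once this local families index theorem is in hand, the $t\to\infty$ estimates and the identification of the transgression with the analytic torsion are comparatively formal manipulations with superconnections.
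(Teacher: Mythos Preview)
The paper does not prove this statement at all: it is quoted as a theorem of Bismut, Gillet, and Soul\'e with the citation \cite{bgs}, and the only remark added is that the case of a reduced base space was verified in \cite[Section~12]{f-s}. There is therefore nothing in the paper against which to compare your argument.

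That said, your outline is a reasonable summary of the strategy actually carried out in \cite{bgs}: the Quillen metric as $L^2$-metric corrected by analytic torsion, Bismut's superconnection $\mathbb A_t$ on the infinite-dimensional pushforward, the transgression identity linking the two asymptotic regimes, the local families index computation at $t\to 0^+$ via Getzler rescaling, and the identification of the large-$t$ limit with the Chern character of the harmonic direct image. As a sketch this is accurate in spirit, though of course each step (uniform heat-kernel bounds, the precise Mellin-transform bookkeeping that turns the transgression into $\ddb\log\tau$, smoothness of $h^Q$ across jumps in cohomology) is a substantial piece of analysis in the original papers, and your write-up would not pass as a self-contained proof. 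But since the paper under review treats the result as a black box, no proof was expected here.
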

The statement of the theorem was verified also in the case where $S$ denotes a reduced complex space in \cite[Section 12]{f-s}.

In \cite{a-s} we used the results by Bismut \cite{bi1,bi2} about the generalization of \eqref{eq:BGS} to singular families, together with the formulas from \cite{ott1,ott2}.

In the above situation, let $\wt\eta_{\cX/S}$ be a further relative \ka form. Denote by $h^Q$, and $\wt h^Q$ the respective Quillen metrics on $\lambda(\xi)$. Then (in our notation) the corresponding Bott-Chern form is denoted by $\wt{Td}(\eta_{\cX/S},\wt\eta_{\cX/S})$:
\begin{equation}
\frac{\ii}{2\pi}\, \pt \ol{\pt}\, \wt{\mathrm{td}}(\eta_{\cX/S},\wt\eta_{\cX/S})= \mathrm{td}(\eta_{\cX/S})- \mathrm{td}(\wt\eta_{\cX/S})
\end{equation}
\begin{theorem}[{\cite[(0.4)]{bgs}}]\label{th:bgs_cw}
	When the relative \ka form $\eta_{\cX/S}$ is replaced by $\wt\eta_{\cX/S}$ the Quillen metric is multiplied by the exponential of the component of degree zero of the following integral:
	\begin{equation}\label{eq:wthq}
    \wt h^Q  =  \exp \left[ \int_{\cX/S} \wt{\mathrm{td}}(\eta_{\cX/S},\wt\eta_{\cX/S}) \cdot \mathrm{ch}(\xi,h) \right]^{(0)}\!\!\! \cdot h^Q
\end{equation}
\end{theorem}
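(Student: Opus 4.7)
The plan is to derive \eqref{eq:wthq} by combining the BGS curvature identity \eqref{eq:BGS}, applied separately to $\eta_{\cX/S}$ and $\wt\eta_{\cX/S}$, with the transgression property that defines the Bott-Chern secondary Todd form, and then to fix the resulting pluriharmonic ambiguity via a one-parameter deformation along a path of relative \ka forms.

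First, I would interpolate $\eta_{\cX/S}$ and $\wt\eta_{\cX/S}$ by the straight-line path $\eta_t=(1-t)\eta_{\cX/S}+t\wt\eta_{\cX/S}$ for $t\in[0,1]$, which remains relative \ka since positivity on fibers is preserved under convex combinations. Writing $h^Q_t$ for the Quillen metric on $\lambda(\xi)$ associated with $\eta_t$ (keeping the hermitian datum $(\xi,h)$ fixed) and applying \eqref{eq:BGS} at $t=0$ and $t=1$, the transgression identity
$$
\frac{\ii}{2\pi}\,\ddb\,\wt{\mathrm{td}}(\eta_{\cX/S},\wt\eta_{\cX/S})=\mathrm{td}(\eta_{\cX/S})-\mathrm{td}(\wt\eta_{\cX/S})
$$
that defines the secondary Bott-Chern form yields, after commuting $\ddb$ through the fiber integral of the proper smooth map $f:\cX\to S$ (legitimate because $\mathrm{ch}(\xi,h)$ is closed) and extracting the correct bidegree on $S$,
$$
c_1(\lambda(\xi),h^Q)-c_1(\lambda(\xi),\wt h^Q)=-\frac{\ii}{2\pi}\,\ddb\left[\int_{\cX/S}\wt{\mathrm{td}}(\eta_{\cX/S},\wt\eta_{\cX/S})\cdot\mathrm{ch}(\xi,h)\right]^{(0)}.
$$
Equivalently, $\log(\wt h^Q/h^Q)$ agrees with the scalar function $\bigl[\int_{\cX/S}\wt{\mathrm{td}}\cdot\mathrm{ch}(\xi,h)\bigr]^{(0)}$ modulo a pluriharmonic function on $S$.

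The main obstacle is removing this pluriharmonic ambiguity to obtain the pointwise identity \eqref{eq:wthq}. To this end I would differentiate $\log h^Q_t$ in $t$: the Quillen metric factors as the product of the $L^2$ metric on the fiberwise cohomology $H^\bullet(\cX_s,\xi|_{\cX_s})$ and the exponential of the Ray-Singer analytic torsion, and both factors admit explicit first-order variation formulas under a perturbation of the fiber metric, expressed in terms of heat-kernel traces on $\cX_s$. Carrying out that calculation produces an infinitesimal anomaly of the form
$$
\frac{d}{dt}\log h^Q_t=\left[\int_{\cX/S}\wt{\mathrm{td}}'(\eta_t;\dot\eta_t)\cdot\mathrm{ch}(\xi,h)\right]^{(0)}
$$
pointwise on $S$, not merely modulo a pluriharmonic function; this heat-kernel identity is the analytic heart of \cite{bgs} and I would invoke it here as a black box. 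Integrating over $t\in[0,1]$ and using the additivity of the secondary Bott-Chern class along the path then delivers \eqref{eq:wthq}.
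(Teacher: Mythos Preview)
The paper does not prove this theorem at all: it is quoted verbatim from Bismut--Gillet--Soul\'e \cite[(0.4)]{bgs} and used as a black box, with no argument given. So there is no ``paper's own proof'' to compare against.

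Your sketch is a reasonable outline of how the anomaly formula is actually established in \cite{bgs}, and you correctly isolate the key point: the curvature identity \eqref{eq:BGS} only pins down $\log(\wt h^Q/h^Q)$ up to a pluriharmonic function on $S$, and eliminating that ambiguity requires the infinitesimal heat-kernel variation formula for the analytic torsion, which you explicitly say you would invoke as a black box. That last step \emph{is} the theorem --- the pointwise variation identity for $\frac{d}{dt}\log h^Q_t$ is precisely the content of the anomaly formula in \cite{bgs}, not a lemma one can cite on the way to it. So your proposal is not really an independent proof but a correct unpacking of the statement into its two layers (the cohomological $\ddb$-identity, which is soft, and the pointwise heat-kernel identity, which is hard), followed by a citation of \cite{bgs} for the hard part. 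For the purposes of this paper that is exactly what is done, only more tersely: the authors skip the soft layer and cite the whole thing.
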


A calculation of the Quillen metric in a degenerate situation was achieved by Yoshikawa \cite{yo}. We will describe his result briefly.

\subsection{Barlet class}
The Barlet class $\mathcal B(U)$ of continuous functions of one complex variable is defined as follows: Let $0\in U\subset \C$, then $\varphi(t)\in \mathcal B(U)$, if
$$
\varphi(t)=f_0(t) + \sum^{m}_{\ell=1}\sum^n_{k=0}|t|^{2r_i}(-\log|t|)^k f_{\ell,k(t)}
$$
for some $r_1,\ldots,r_m \in \mathbb Q\cap(0,1]$ and $f_0, f_{\ell,k}\in \cinf(U)$ for $\ell=1,\ldots m$, $k=1,\ldots,n$. We denote functions equivalent up to the Barlet class ($\equiv_{\cB}$), if their difference is contained in $\cB(U)$.

\subsection{Yoshikawa's result}
Yoshikawa considers the following situation; we keep his notation: Let $X$ be a compact (connected) \ka manifold of dimension $n+1$ with \ka metric $g_X$, $S$ a compact Riemann surface, and $\pi: X \to S$ a surjective holomorphic map.  Let $U$ be the universal hyperplane bundle of rank $n = \dim X/S$ over $\mathbb P(TX)^\vee$, and let $H := \cO_{\mathbb P(TX)^\vee}(1)$. The map $q:\wt X \to X$ denotes a resolution of singularities of the Gauss map $\mu$ (cf.~\cite{yo} for details). The Gauss map is holomorphic over the regular locus of $\pi$ so that $q$ is a modification, whose exceptional locus $E\subset \wt X$ is a simple normal crossings divisor, which is mapped under $q$ to the singular fibers of $\pi$. The Gauss map with singularities resolved is denoted by $\wt\mu$.

\begin{theorem}[Yoshikawa {\cite[Theorem 1.1]{yo}}]\label{th:yo}
Let $t$ denote a local holomorphic coordinate near a singular value of $\pi$ such that the singular point corresponds to the value $t=0$ with $X_0=\pi^{-1}(0)$. Let $(\xi,h)$ be a (virtual) hermitian holomorphic vector bundle.

Up to an element of the Barlet class the following identity holds
\begin{equation}\label{eq:yo}
 \log\|\sigma\|^2_{Q,\lambda(\xi)} \equiv_B \left( \int_{E\cap q^{-1}(X_0)} \wt\mu^*\left\{\mathrm{td}(U)\frac{\mathrm{td}(H)-1}{c_1(H)}  \right\} q^*\mathrm{ch}(\xi)    \right)\log|t|^2
\end{equation}
where $\sigma$ is a local section of the determinant line bundle given by a local coordinate function $t$ vanishing at the singular point of the map.
\end{theorem}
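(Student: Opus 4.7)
The plan is to combine the Bismut--Gillet--Soul\'e anomaly formula (Theorem~\ref{th:bgs_cw}) with the Bismut--Lebeau immersion theorem for Quillen metrics, applied to a suitable resolution of the Gauss map. The idea is that away from the singular fiber $X_0$ the Quillen metric $h^Q$ on $\lambda(\xi)$ is governed by the BGS curvature formula \eqref{eq:BGS}; the whole difficulty is to identify the leading logarithmic singularity of $\log\|\sigma\|^2_{Q,\lambda(\xi)}$ at $t=0$ modulo $\cB(U)$. The Gauss map $\mu$ embeds the smooth locus of $\pi$ into $\mathbb P(TX)^\vee$ via the relative tangent spaces of the fibers, and $U\hookrightarrow q^*TX$, $H=\cO(1)$ provide an algebraic model of the relative tangent bundle which extends across the degenerate locus after blowing up.

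First I would choose a hermitian metric on the universal bundle $U$ and on $H=\cO_{\mathbb P(TX)^\vee}(1)$, pull them back to $\wt X$ by $\wt\mu$, and construct on $\wt X$ a relative K\"ahler form $\eta_{\wt X/S}$ which agrees with the one induced by $g_X$ off a neighborhood of the exceptional divisor $E$ and which is adapted to the normal crossings structure of $E\cup \pi^{-1}(0)$. Over the smooth locus of $\pi$ the BGS formula \eqref{eq:BGS} expresses $c_1(\lambda(\xi),h^Q)$ as a fiber integral of $\mathrm{td}(\cX/S)\cdot\mathrm{ch}(\xi)$. The anomaly formula (Theorem~\ref{th:bgs_cw}) lets me swap the original metric for the pulled-back one coming from $U$, at the cost of a smooth term which contributes only to $\cB(U)$. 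After this change, the relevant Todd form becomes $\mathrm{td}(U)$ pulled back along $\wt\mu$.

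Next I would apply the Bismut--Lebeau immersion theorem to the embedding provided by the Gauss construction in order to replace the (degenerating) relative tangent bundle of $\pi$ by the locally free $U$: this replacement produces an extra factor $(\mathrm{td}(H)-1)/c_1(H)$ coming from the normal bundle of the projectivization, exactly matching the integrand in~\eqref{eq:yo}. Integration over $\wt X/S$ then splits into a contribution from $\wt X\setminus E$, where $\wt\mu$ is an isomorphism and the fiber integral is of class $\cinf$ in $t$ (hence in $\cB(U)$), and a contribution localized on $E\cap q^{-1}(X_0)$. For the latter, I would parametrize a neighborhood of the singular fiber by the equation $t=0$ on $S$ and expand $\log\|\sigma\|^2_Q$ using the small-time asymptotics of the fiberwise heat kernels; the constant terms yield functions in $\cB(U)$, while the leading singularity produces the coefficient $\int_{E\cap q^{-1}(X_0)}\wt\mu^*\{\mathrm{td}(U)(\mathrm{td}(H)-1)/c_1(H)\}\cdot q^*\mathrm{ch}(\xi)$ multiplying $\log|t|^2$.

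The hard part will be the analytic torsion estimate near $t=0$: showing that, apart from the explicit $\log|t|^2$ coefficient supported on $E\cap q^{-1}(X_0)$, all remaining contributions fit into the Barlet class $\cB(U)$. This requires control of the small-eigenvalue spectrum of the $\ol\pt$-Laplacians on the degenerating fibers and a careful asymptotic expansion of the Ray--Singer torsion for families acquiring singular fibers over a curve, which is the technical heart of Yoshikawa's argument and relies in turn on Bismut's machinery for degenerations of Dolbeault complexes.
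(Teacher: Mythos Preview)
The paper does not supply a proof of this statement: Theorem~\ref{th:yo} is quoted verbatim from Yoshikawa \cite[Theorem~1.1]{yo} and used as a black box in Sections~\ref{se:appBGS} and~\ref{se:appY}. There is therefore nothing in the present paper to compare your proposal against. Your outline is a reasonable high-level summary of the strategy Yoshikawa actually follows (resolution of the Gauss map, the BGS anomaly formula to normalize metrics, the Bismut--Lebeau immersion formula producing the factor $(\mathrm{td}(H)-1)/c_1(H)$, and the hard analysis of the small-eigenvalue torsion near $t=0$), but since the authors here do not attempt to reprove it, the appropriate response for this paper is simply to cite \cite{yo} rather than to supply an argument.
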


\section{Application of the BGS-Theorem}\label{se:appBGS}
Let $Z\subset \mathbb P_N$ be a projective manifold, equipped with a positive line bundle $(\cL_Z, h_Z)$  such that $\omega_Z= c_1(\cL_Z,h_Z))$ is a \ka form. We may assume that $(\cL_Z, h_Z)$ is the line bundle $\cO_Z(1)$ equipped with the Fubini-Study hermitian metric.

We consider the family \eqref{eq:embfam} of embedded subvarieties of dimension $n$ over an irreducible component $\DD\subset \DD_Z$. We define $\omega_\cX= \omega_Z|\cX$ as above inducing a relative \ka structure.

We denote the restriction of $(\mathrm{pr}^*_1\cL_Z, \mathrm{pr}^*_1 h_Z)$ to $\cX$ by $(\cL_\cX, h_\cX)$, and we denote by $\xi$ the element $(\cL_\cX - \cO_\cX)^{\otimes(n+1)}$ of the Grothendieck group, which carries an induced hermitian metric (as element of the Grothendieck group).

We use the earlier argument showing that
\begin{equation}\label{eq:xi}
\mathrm{ch}(\xi,h)= c_1(\cL_\cX,h_\cX)^{n+1} + \ldots = \omega_{\cX}^{n+1}+ \ldots
\end{equation}
so that the term in degree $(n+1,n+1)$ of the integrand in \eqref{eq:BGS} equals $\omega_\cX^{n+1}$.

Let $(\xi,h)$ be a hermitian virtual vector bundle of the above type.

Now the following problem arises when applying the BGS-Theorem to the hermitian virtual vector bundle $(\xi,h)$: For our special choice that satisfies \eqref{eq:xi} the integrand in   \eqref{eq:wthq} does not depend upon the relative \ka form that is used to compute the relative Todd character. only the term of degree zero, namely the number $1$ matters. However, this fact does not imply right away that the corresponding Quillen metric on the determinant line bundle is independent of the choice of the auxiliary relative \ka form $\eta_{\cX/S}$ for the family $\cX \to S$.

\begin{proposition}\label{pr:indep}
Let $(\xi,h)$  be a hermitian virtual vector bundle of degree zero. Then
$$
c_1(\lambda(\xi),h^Q)= \left[\int_{\cX/S} {\rm ch}(\xi,h)\right]^{(1,1)}.
$$
Moreover, the Quillen metric $h^Q$ does not depend upon the choice of the relative \ka form $\omega_{\cX/S}$.
\end{proposition}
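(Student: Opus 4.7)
The plan is to deduce both claims from the BGS theorem~\eqref{eq:BGS} and its Bott-Chern variational companion Theorem~\ref{th:bgs_cw} via a bidegree count. The key observation is that for a virtual hermitian bundle $(\xi,h)$ of the type singled out in \eqref{eq:xi}, the Chern character form $\mathrm{ch}(\xi,h)$ vanishes in every total bidegree strictly below $(n+1,n+1)$.

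For the formula itself I would expand $\mathrm{td}(\cX/S,\eta_{\cX/S})=1+\tau^{(1,1)}+\tau^{(2,2)}+\ldots$ inside \eqref{eq:BGS}. Fiber integration over the $n$-dimensional fibers drops the total degree by $2n$, so the $(1,1)$-component on $S$ is extracted from the $(n+1,n+1)$-component of the integrand on $\cX$. For each $k\geq 1$ the cross-term $\tau^{(k,k)} \we \mathrm{ch}(\xi,h)^{(n+1-k,n+1-k)}$ contains a factor of bidegree strictly below $(n+1,n+1)$ and thus vanishes. Only the term $1 \we \mathrm{ch}(\xi,h)^{(n+1,n+1)}$ survives, which gives the stated identity (modulo the sign convention carried by the definition of $\lambda(\xi)$).

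For the independence I would let $\wt\eta_{\cX/S}$ be a second relative \ka form and invoke \eqref{eq:wthq}: $\wt h^Q/h^Q$ equals the exponential of the degree-zero component of the fiber integral of $\wt{\mathrm{td}}(\eta_{\cX/S},\wt\eta_{\cX/S}) \we \mathrm{ch}(\xi,h)$. The relevant piece of the integrand on $\cX$ is its $(n,n)$-component, a sum of terms $\wt{\mathrm{td}}^{(a,a)}\we \mathrm{ch}(\xi,h)^{(n-a,n-a)}$ for $a\geq 0$. Every such term carries a factor $\mathrm{ch}(\xi,h)$ in a bidegree strictly below $(n+1,n+1)$ and therefore vanishes. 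The exponent is zero, so $\wt h^Q=h^Q$.

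The whole argument rests on the single input that $\mathrm{ch}(\xi,h)$ has no nonzero components in low bidegree; once this is recognized, both assertions are purely formal. I do not foresee any serious obstacle, and in particular no delicate choice of representative for the Bott-Chern secondary class $\wt{\mathrm{td}}$ is needed, since the vanishing of $\mathrm{ch}(\xi,h)$ in bidegrees up to and including $(n,n)$ already forces the exponent in \eqref{eq:wthq} to be zero.
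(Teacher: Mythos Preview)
Your argument is correct and follows the same route as the paper: both claims are reduced to the observation that $\mathrm{ch}(\xi,h)$ has no components in bidegree below $(n+1,n+1)$, so in \eqref{eq:BGS} only the constant term of the Todd form survives, and in \eqref{eq:wthq} the $(n,n)$-part of the integrand vanishes identically. Your write-up is simply a more explicit unpacking of the bidegree count that the paper compresses into two sentences.
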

\begin{proof}
The first statement follows from \eqref{eq:xi}, and Section~3.
For the second statement we refer to Theorem~\ref{th:bgs_cw}: Again the term of lowest degree in the integrand of  \eqref{eq:wthq} is of degree $(n+1,n+1)$ so that the degree zero term of the fiber integral \eqref{eq:wthq} is identically zero.
\end{proof}

\begin{proposition}
\strut
\begin{enumerate}
\item[(i)]
The Knudsen-Mumford determinant line bundle
$$
\lambda^{KM}(\xi)= \det\uu{R} f_*\left((\cL_\cX - \cO_\cX)^{\otimes(n+1)}\right)^{-1}
$$
is defined over the irreducible component $\DD$ of the Douady space.
\item[(ii)]
Over the locus $\mathbf{D'}$ of embedded submanifolds
\begin{equation}\label{eq:wpdetbdl}
c_1(\lambda(\xi),h^Q)= \omega^{WP} \text{ for } \lambda(\xi)=\lambda^{KM}(\xi)^{-1}
\end{equation}
holds.
\item[(iii)]
The \wp current is defined over $\DD$ and possesses locally  plurisubharmonic, continuous potentials. Near points with reduced fibers the potential can be chosen as strictly plurisubharmonic.
\end{enumerate}
\end{proposition}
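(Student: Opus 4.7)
For (i), the Douady morphism $f:\cX\to\DD$ is proper and flat by the very construction of the Douady space, flatness being built into the representing functor \eqref{eq:doudi}. The virtual sheaf $\xi = (\cL_\cX - \cO_\cX)^{\otimes(n+1)}$ lies in $K_0^{\mathrm{hol}}(\cX)$, and for any flat proper morphism of complex spaces the Knudsen-Mumford construction \cite{k-m} produces an invertible sheaf $\det\uu{R} f_*\xi$ on the base that commutes with base change. Taking the inverse yields $\lambda^{KM}(\xi)$, defined on all of $\DD$ with no regularity hypothesis on the fibers.

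For (ii), I work over $\mathbf{D'}$ where $f':\cX'\to\mathbf{D'}$ is proper and smooth, so Proposition~\ref{pr:indep} applies and gives
$$
c_1(\lambda(\xi),h^Q) \;=\; \left[\int_{\cX'/\mathbf{D'}} \mathrm{ch}(\xi,h)\right]^{(1,1)}.
$$
By \eqref{eq:xi} the character $\mathrm{ch}(\xi,h)$ begins in bidegree $(n+1,n+1)$ with $\omega_\cX^{n+1}$, and since the fiber dimension is $n$ this leading term is the only one that survives fiber integration to a $(1,1)$-form. The right-hand side therefore reduces to $\int_{\cX'/\mathbf{D'}}\omega_\cX^{n+1}$, which is $\omega^{WP}$ by Proposition~\ref{pr:fibint}. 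The duality convention $\lambda(\xi) = \lambda^{KM}(\xi)^{-1}$ is imposed precisely so that this identity carries the correct sign.

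For (iii), existence of $\omega^{WP}$ as a positive current on $\DD$ with continuous plurisubharmonic local potentials is Proposition~\ref{pr:contpot}: Varouchas's Theorem~\ref{th:VAthm2} furnishes a smooth local $(n,n)$-form $\psi$ with $\ddb\psi = \omega_\cX^{n+1}$, and continuity of the fiber integral of such a $\psi$ along $f$ (Andreotti-Norguet, Stoll) produces a continuous plurisubharmonic potential. To upgrade this to a \emph{strictly} plurisubharmonic potential near a point $s_0\in\DD$ whose fiber has a reduced component, I invoke the analysis at the end of Section~\ref{se:gecuwp}: a $\omega^{WP}$-null direction at $s_0$ would force the horizontal lift of the corresponding tangent vector to be horizontal also in $Z\times\DD$, hence the infinitesimal deformation trivial, contradicting effectivity of the Douady parametrization. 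Combining this strict positivity with the continuous potential---via the decomposition $\omega^{WP}_U + \mathrm{vol}(\cX_s)\,\omega_U$ appearing in the proof of Proposition~\ref{pr:contpot}, and subtracting a small multiple of a local \ka potential of $\DD$---gives a strictly plurisubharmonic continuous potential near $s_0$. The only genuine difficulty is hidden inside Proposition~\ref{pr:contpot}, namely Varouchas's theorem together with continuity of the relevant fiber integral; once these are granted, the rest of the argument is bookkeeping of degrees and signs.
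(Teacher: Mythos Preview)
Your proof is correct and follows essentially the same route as the paper: part (i) is the Knudsen--Mumford construction for flat proper maps, part (ii) combines Proposition~\ref{pr:indep} (equivalently \eqref{eq:BGS}) with \eqref{eq:xi} and the fiber-integral formula of Proposition~\ref{pr:fibint}, and part (iii) is Proposition~\ref{pr:contpot} together with the non-degeneracy argument at the end of Section~\ref{se:gecuwp}. Your added remark about subtracting a small multiple of a local K\"ahler potential to pass from strict positivity of the current to strict plurisubharmonicity of the potential is a detail the paper leaves implicit.
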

\begin{proof}
The first part was shown above. The second part follows from Proposition~\ref{pr:fibint}, \eqref{eq:fibint0}, \eqref{eq:BGS}, and \eqref{eq:xi},  and the last statement follows from the discussion in the preceding paragraph. We saw that the \wp norm of a tangent vector may be equal to $+\infty$ at points with reduces singular fibers. At points with non-reduced fibers strict positivity cannot be expected.
\end{proof}

\section{Application of Yoshikawa's Theorem}\label{se:appY}
\subsection{Smooth base curves}
Let $(\cX,\wt\omega_\cX)$ be a compact smooth \ka space, $C$ a compact Riemann surface, and $f:\cX\to C$ a proper, flat, holomorphic map that is smooth over the complement $C'\subset C$ of a discrete set. Let $(\xi, h_\xi)$ be a holomorphic, hermitian (virtual) vector bundle on $\cX$ with determinant line bundle $\lambda(\xi)$. Over $C'$ we have a Quillen metric $h^Q_\xi$. If $f$ is the restriction of a projection map  $\mathrm{pr}:Z\times C \to C$ to $\cX\subset Z\times C$, we consider a desingularization $\mu:\cY \to \cX$ (by a sequence of blow-ups of $Z$ with smooth centers, and restriction to the proper transform of $\cX$), and the diagram
\begin{equation}\label{eq:famC}
\xymatrix{
\cY \ar[r]^\mu\ar[dr]_{\wt f} & \cX \ar[d]^f  \ar@{^{(}->}^i[r]  &  Z \times C \ar[dl]^{\mathrm{pr}}\\ & C &
}
\end{equation}
Since $C$ is of dimension one and smooth, the map $\wt f$ is flat. We establish the situation of Section~\ref{se:appBGS} and assume that $Z\subset \mathbb P_N$, and $\omega_Z= c_1(\cL_Z,h_Z)$.

Note that allowing for non-reduced fibers we do not need the semi-stable reduction theorem. The space $\cY$ is known to be K\"ah\-ler, we chose an auxiliary \ka form $\omega_\cY$. Concerning the notation of determinant line bundles, we indicate the respective map as a subscript, if necessary.

Yoshikawa's Theorem \cite[Theorem 1.1]{yo} is applicable to the (flat) family $\wt f:\cY \to C$ together with the determinant line bundle $\lambda_{\wt f}(\mu^*\xi)$ and the Quillen metric $h^Q_{\mu^*\xi}$ over $C'$. It follows from \eqref{eq:xi} that the contribution of the integral \eqref{eq:yo}vanishes, because the integrand in degree $(n,n)$ is identically zero. Hence Theorem~\ref{th:yo} immediately implies the following statement.

\begin{proposition}\label{pr:cont}
  The logarithm of the Quillen metric
  $$
  \log(h^Q_{\mu^*\xi})
  $$
  is continuous on all of $C$.
\end{proposition}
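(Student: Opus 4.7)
The plan is to apply Yoshikawa's Theorem~\ref{th:yo} directly to the family $\wt f:\cY\to C$ equipped with the hermitian virtual bundle $\mu^*\xi$, and to observe that for our choice of $\xi$ the coefficient of $\log|t|^2$ in \eqref{eq:yo} vanishes identically on dimensional grounds. Granting this, Yoshikawa's formula reduces to the statement $\log\|\sigma\|^2_{Q,\mu^*\xi}\equiv_{\cB} 0$ near any singular value of $\wt f$, so the local weight of $h^Q_{\mu^*\xi}$ lies in the Barlet class $\cB(U)$. Every element of $\cB(U)$ is continuous at $t=0$, since the smooth summand $f_0$ is manifestly continuous and each remaining summand $|t|^{2r_\ell}(-\log|t|)^k f_{\ell,k}(t)$ tends to $0$ as $|t|\to 0$ because $r_\ell>0$ dominates the logarithmic factor. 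Combined with the smoothness of $\log h^Q_{\mu^*\xi}$ over $C'$ (by the BGS-Theorem together with Proposition~\ref{pr:indep}), this yields continuity on all of $C$.

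First I would verify that Yoshikawa's hypotheses are in place: $\cY$ is smooth and Kähler by construction, with an auxiliary relative Kähler form derived from $\omega_\cY$; the map $\wt f$ is proper, and is automatically flat since $C$ is a smooth curve and $\wt f$ is surjective without embedded fibers; the Quillen metric on $\lambda_{\wt f}(\mu^*\xi)$ over $C'$ is independent of the auxiliary relative Kähler form by Proposition~\ref{pr:indep}.

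The key step is to show the vanishing of the coefficient
$$
\int_{E\cap q^{-1}(X_0)}\wt\mu^*\!\left\{\mathrm{td}(U)\frac{\mathrm{td}(H)-1}{c_1(H)}\right\}q^*\mathrm{ch}(\mu^*\xi),
$$
where $q:\wt X\to\cY$ resolves the Gauss map of $\wt f$, the exceptional divisor $E\subset\wt X$ is a simple normal crossings divisor of complex dimension $n$, and $X_0=\wt f^{-1}(t_0)$ is a singular fiber. By \eqref{eq:xi} the pullback $q^*\mathrm{ch}(\mu^*\xi)$ is a sum of forms of bidegree at least $(n+1,n+1)$, while the factor $\mathrm{td}(U)(\mathrm{td}(H)-1)/c_1(H)$ starts in degree zero; the total integrand therefore has bidegree $\geq (n+1,n+1)$. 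The intersection $E\cap q^{-1}(X_0)$ is an analytic subspace of $\wt X$ of complex dimension at most $n$, and any smooth differential form of degree exceeding $2n$ restricts to zero on such a space. Hence the integral vanishes.

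The only potential obstacle is checking that the bidegree argument which already reduced the BGS integrand in \eqref{eq:BGS} to a fiber integral of $\omega_\cX^{n+1}$, thanks to the special choice $\xi=(\cL_\cX-\cO_\cX)^{\otimes(n+1)}$, transfers verbatim to Yoshikawa's integral on the further resolution $\wt X$. Since $q^*$ preserves bidegrees and the extra universal-bundle factor contributes only terms of non-negative degree, no residual $(n,n)$-component can appear. This is the sole place where the argument could fail, and it does not.
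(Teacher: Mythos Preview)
Your proposal is correct and follows essentially the same approach as the paper: apply Yoshikawa's Theorem~\ref{th:yo} to $\wt f:\cY\to C$ with the virtual bundle $\mu^*\xi$, and use \eqref{eq:xi} to see that the integrand in \eqref{eq:yo} has no component in degree $(n,n)$, so the coefficient of $\log|t|^2$ vanishes. You supply more detail than the paper---verifying Yoshikawa's hypotheses and spelling out why functions in the Barlet class are continuous at $t=0$---but the core argument is identical.
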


By Proposition~\ref{pr:indep} the metric $h^Q_{\mu^*\xi}$ is independent of the choice of the \ka metric on $\cY$. Over $C'$ the line bundles $\lambda_f(\xi)$, and $\lambda_{\wt f}(\mu^*\xi)$ are isomorphic. Hence $h^Q_\xi$ corresponds to $h^Q_{\mu^*\xi}$ with respect to such an isomorphism over $C'$. More can be said.

\begin{proposition}\label{pr:comphQ}
The canonical morphism $\uu R f_* \xi \to \uu R\wt f_*(\mu^* \xi)$  induces a natural morphism $\lambda_f(\xi) \to \lambda_{\wt f}(\mu^*\xi)$ so that we can identify $\lambda_f(\xi)$ with $\lambda_{\wt f}(\mu^*\xi)(-D)$ for some effective divisor $D$ on $C$. Let $\sigma$ be a canonical section of $\cO_C(D)$. Then
\begin{equation}\label{eq:comphQ}
h^Q_{\mu^*\xi}\cdot |\sigma|^2= h^Q_\xi
\end{equation}
holds over $C'$ with respect to the above identification of determinant line bundles.
\end{proposition}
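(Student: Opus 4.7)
The plan is to construct the morphism $\lambda_f(\xi)\to\lambda_{\wt f}(\mu^*\xi)$ from the canonical morphism of derived pushforwards, identify it with multiplication by a canonical section $\sigma$ of $\cO_C(D)$ for an effective divisor $D$ supported on $C\setminus C'$, and then deduce \eqref{eq:comphQ} from the naturality of the Quillen metric applied over $C'$, where $\mu$ is a biholomorphism of smooth families.

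First I would build the morphism. The unit of adjunction $\xi\to R\mu_*\mu^*\xi$, pushed forward by $Rf_*$ and combined with $f\circ\mu=\wt f$, yields the canonical arrow $\uu{R} f_*\xi \to \uu{R}\wt f_*(\mu^*\xi)$. Restricted to $C'$ this map is a quasi-isomorphism, since $\mu$ is an isomorphism away from the exceptional locus $E$, which is mapped into the singular fibers of $f$. Taking Knudsen-Mumford determinants (with the sign convention of the excerpt) we obtain a morphism of holomorphic line bundles on $C$ that is invertible over $C'$; its cokernel sheaf is thus supported on the finite set $C\setminus C'$, and the morphism is multiplication by a canonical section $\sigma$ of some $\cO_C(D)$ with $D$ effective. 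This gives the identification $\lambda_f(\xi)=\lambda_{\wt f}(\mu^*\xi)(-D)$ claimed in the statement.

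Next I would invoke the naturality of the Quillen metric. Both $f$ and $\wt f$ are proper and smooth over $C'$, and $\mu|_{\wt f^{-1}(C')}$ is a biholomorphism. By Proposition~\ref{pr:indep}, applied to our virtual bundle $\xi$, the Quillen metric is independent of the choice of auxiliary relative \ka form, so we may choose $\omega_\cY$ over $\wt f^{-1}(C')$ to be the $\mu$-pullback of $\omega_\cX$. With this choice the BGS construction is manifestly functorial under pullback along $\mu$, and the resulting natural isomorphism of determinant line bundles identifies $h^Q_{\mu^*\xi}$ with $h^Q_\xi$ over $C'$.

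Finally I would check that the natural identification produced by functoriality of the Quillen metric agrees with multiplication by $\sigma$ restricted to $C'$. Both arise from the same underlying canonical morphism $\uu{R}f_*\xi \to \uu{R}\wt f_*(\mu^*\xi)$, which over $C'$ reduces to the tautological quasi-isomorphism coming from the biholomorphism $\mu|_{\wt f^{-1}(C')}$. Writing a local section $e$ of $\lambda_f(\xi)$ as $\sigma\cdot e$ inside $\lambda_{\wt f}(\mu^*\xi)$ and applying the metric identity gives
$$
h^Q_\xi(e,e)=h^Q_{\mu^*\xi}(\sigma e,\sigma e)=|\sigma|^2\cdot h^Q_{\mu^*\xi}(e,e),
$$
which is \eqref{eq:comphQ}. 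The main obstacle is the careful bookkeeping of identifications: one must ensure that the canonical morphism producing $\sigma$ and the naturality isomorphism underlying the metric comparison are the same map when restricted to $C'$; once this compatibility is checked, the identity follows directly.
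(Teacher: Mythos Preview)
Your proposal is correct and follows the same approach as the paper, which in fact gives no formal proof of this proposition: the only justification is the short paragraph immediately preceding it, invoking Proposition~\ref{pr:indep} for independence of $h^Q$ from the relative K\"ahler form and the fact that $\mu$ is a biholomorphism over $C'$, so that the two determinant line bundles and their Quillen metrics coincide there. Your write-up expands exactly these points---adding the explicit construction of the canonical morphism via adjunction and the identification of the effective divisor $D$---and is thus more detailed than what the paper itself supplies.
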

Note that for $\lambda_f(\xi)$ over $C$ a Quillen metric is not defined. However, we can use \eqref{eq:comphQ} to define such an extension $h^Q_\xi$ to all of $C$, which is a singular hermitian metric on  $C$. Its curvature might not be a positive current unless $D=0$.

Since the \wp current possesses local continuous potentials, restrictions to subspaces that intersect the regular locus of the restricted  family are well defined. We now can identify a local potential for the \wp current by Proposition~\ref{pr:cont}, and Proposition~\ref{pr:comphQ} according to Theorem~\ref{th:yo}. It turns out that the \wp current, as it was defined by the fiber integral \eqref{eq:fibint}, can be computed from the pullback of $\xi$ to a desingularization and the continuous extension of the Quillen metric in the sense of Yoshikawa.

This means that the \wp form is determined by the Quillen metric taken for the desingularization.

(Observe that the process is independent of the choice of the desingularization, since further blow-ups with smooth centers of non-singular spaces do not change the cohomology, and since \eqref{eq:wthq} together with \eqref{eq:yo} imply that also the Quillen metrics are invariant under this further process.)
\begin{theorem}\label{th:wpC}
Let $\omega^{WP}_C$ be the \wp current for a family \eqref{eq:famC}. Then
$$
\omega^{WP}_C = c_1(\lambda_{\wt f}(\mu^*\xi), h^Q_{\mu^*\xi}).
$$
\end{theorem}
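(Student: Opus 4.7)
The plan is to compare the two currents on $C$ by first establishing equality on the smooth locus $C'$ and then extending across the finite singular set by exploiting that both sides admit continuous local $\partial\ol\partial$-potentials on $C$. Over $C'$ the identification is a direct consequence of the BGS-Theorem in its version from Proposition~\ref{pr:indep}, combined with the comparison in Proposition~\ref{pr:comphQ} of $\lambda_f(\xi)$ with $\lambda_{\wt f}(\mu^*\xi)$; continuity of the potentials across $C\setminus C'$ will then force the full identity by the removable singularity theorem for harmonic functions on a Riemann surface.

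In detail, on $C'$ Proposition~\ref{pr:fibint}, equation \eqref{eq:xi} and Proposition~\ref{pr:indep} yield
$$
c_1(\lambda_f(\xi),h^Q_\xi) \;=\; \omega^{WP}_{C'}.
$$
Proposition~\ref{pr:comphQ} gives the isomorphism $\lambda_f(\xi)\simeq \lambda_{\wt f}(\mu^*\xi)\otimes \cO_C(-D)$ together with $h^Q_{\mu^*\xi}\cdot|\sigma|^2 = h^Q_\xi$, hence
$$
c_1(\lambda_f(\xi),h^Q_\xi) \;=\; c_1\bigl(\lambda_{\wt f}(\mu^*\xi),h^Q_{\mu^*\xi}\bigr)-[D]
$$
as currents on $C$. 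The current $[D]$ is supported on the finite set $C\setminus C'$ and hence vanishes on $C'$, so $\omega^{WP}_{C'}=c_1(\lambda_{\wt f}(\mu^*\xi),h^Q_{\mu^*\xi})$ on $C'$.

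To propagate this identity across the singular values I would argue at the level of local potentials. By Proposition~\ref{pr:contpot} (whose argument via Varouchas' Theorem~\ref{th:VAthm2} applies to our family over $C$), the Weil-Petersson current admits continuous local plurisubharmonic potentials on $C$. By Proposition~\ref{pr:cont} the function $\log h^Q_{\mu^*\xi}$ is continuous on all of $C$, so that $c_1(\lambda_{\wt f}(\mu^*\xi),h^Q_{\mu^*\xi})$ likewise has continuous local $\partial\ol\partial$-potentials on $C$. On $C'$ the difference of these two potentials is harmonic, and since both are continuous across the discrete set $C\setminus C'$, the difference extends continuously to $C$; the removable singularity theorem for bounded harmonic functions then forces the extension to be harmonic on $C$. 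Applying $\partial\ol\partial$ gives the asserted equality of currents on all of $C$.

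The main obstacle is the input from Yoshikawa that underlies Proposition~\ref{pr:cont}: one must verify that the coefficient of $\log|t|^2$ in \eqref{eq:yo} vanishes in our setting, i.e.\ that only Barlet-class (hence continuous) contributions survive. This is where the specific choice $\xi=(\cL_\cX-\cO_\cX)^{\otimes(n+1)}$ is essential, because \eqref{eq:xi} forces $\mathrm{ch}(\mu^*\xi)$ to begin in bidegree $(n+1,n+1)$, making the integrand in Yoshikawa's formula vanish by dimensional reasons on $E\cap q^{-1}(X_0)$. Granted this vanishing, the remainder of the proof is the bookkeeping of potentials outlined above; in particular no simple poles $\log|t|^2$ appear in the Quillen metric on the desingularization, which is exactly why the current $T_v$ from the theorem quoted in the introduction does not contribute here.
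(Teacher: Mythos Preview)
Your proof is correct and follows essentially the same strategy as the paper: establish the identity over $C'$ via the BGS formula in the form of Proposition~\ref{pr:indep}, and then extend across the finitely many singular values using that both sides have continuous local $\partial\ol\partial$-potentials (Varouchas for $\omega^{WP}_C$, Yoshikawa/Proposition~\ref{pr:cont} for the Quillen side). The only presentational difference is that the paper transfers the fiber integral directly to $\cY$ via the equality $\int_{\cX/C}\omega_\cX^{n+1}=\int_{\cY/C}(\mu^*\omega_\cX)^{n+1}$ (valid as currents on all of $C$ since $\mu$ is a proper modification with connected fibers), and then applies BGS on $\cY$; you instead stay on $\cX$ over $C'$ and invoke Proposition~\ref{pr:comphQ} to pass to $\lambda_{\wt f}(\mu^*\xi)$, which introduces the divisor $[D]$ only to discard it on $C'$. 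That detour is harmless but unnecessary: over $C'$ the map $\mu$ is an isomorphism, so the two determinant line bundles and Quillen metrics are canonically identified there without any $[D]$ entering. Your final paragraph on why the $\log|t|^2$ coefficient in Yoshikawa's formula vanishes is exactly the argument the paper gives before Proposition~\ref{pr:cont}.
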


\begin{proof}
We have
$$
\omega^{WP}_C = \int_{\cX/C} \omega^{n+1}_\cX =     \int_{\cY/C} (\mu^*\omega_\cX)^{n+1} = c_1(\lambda_{\wt f}(\mu^*\xi, h^Q_{\mu^*\xi})),
$$
where the first equality holds by definition \eqref{eq:fibint}. The second equality holds, because the fibers of $\mu$ are connected, and the third equality is a consequence of Proposition~\ref{pr:indep}, Theorem~\ref{th:yo}, and Theorem~\ref{th:VAthm2}.
\end{proof}
In particular, we have seen that $\log(h^Q_{\mu^*\xi})$ can serve as a continuous potential for $\omega^{WP}_C$. Combining this with Proposition~\ref{pr:comphQ} we get the following immediate consequence.
\begin{corollary}\label{co:wp}
  The \wp current is equal to
  \begin{equation}\label{eq:owpC}
  \omega^{WP}_C = c_1\left(\lambda_f(\xi)\otimes \cO_C(D),\frac{1}{|\sigma|^2}h^Q_\xi\right) = c_1(\lambda_f(\xi), h^Q_\xi) + [D]
  \end{equation}
where $[D]$ denotes the current of integration along $D$ with (positive, integer) multiplicities.
\end{corollary}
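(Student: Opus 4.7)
The corollary is essentially a repackaging of Theorem~\ref{th:wpC} in terms of the intrinsic data $(\lambda_f(\xi),h^Q_\xi)$ that lives on $C$ itself (as opposed to on the desingularization). My plan is therefore to invoke Theorem~\ref{th:wpC} and then translate both sides of
$$
\omega^{WP}_C = c_1(\lambda_{\wt f}(\mu^*\xi), h^Q_{\mu^*\xi})
$$
via the isomorphism $\lambda_{\wt f}(\mu^*\xi) \simeq \lambda_f(\xi)\otimes\cO_C(D)$ supplied by Proposition~\ref{pr:comphQ}, together with the metric identification $h^Q_{\mu^*\xi}=h^Q_\xi\cdot|\sigma|^{-2}$.

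First I would establish the first equality. Under the canonical identification $\lambda_{\wt f}(\mu^*\xi)\simeq\lambda_f(\xi)\otimes\cO_C(D)$, the tensor-product metric on the right, built from $h^Q_\xi$ on $\lambda_f(\xi)$ and the metric $|\sigma|^{-2}$ on $\cO_C(D)$ (the one for which $\|\sigma\|\equiv 1$), corresponds precisely to $h^Q_{\mu^*\xi}$. This is exactly the content of relation \eqref{eq:comphQ}. Hence as hermitian line bundles over $C$ the pair $(\lambda_{\wt f}(\mu^*\xi),h^Q_{\mu^*\xi})$ and $(\lambda_f(\xi)\otimes\cO_C(D),h^Q_\xi\cdot|\sigma|^{-2})$ coincide, and their Chern currents agree. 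Combined with Theorem~\ref{th:wpC} this yields the first equality.

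For the second equality I would use the additivity of the Chern current under tensor products of singular hermitian line bundles,
$$
c_1\!\left(\lambda_f(\xi)\otimes\cO_C(D),\, h^Q_\xi\cdot|\sigma|^{-2}\right)
= c_1(\lambda_f(\xi),h^Q_\xi)+ c_1\!\left(\cO_C(D),|\sigma|^{-2}\right),
$$
and then compute the last summand by Poincar\'e--Lelong. Since $\sigma$ is the canonical section of $\cO_C(D)$, the metric $|\sigma|^{-2}$ is the one for which $\|\sigma\|\equiv 1$; in a local frame $e_D$ with $\sigma=t\cdot e_D$ one has $\|e_D\|^2=|t|^{-2}$, whence
$$
c_1\!\left(\cO_C(D),|\sigma|^{-2}\right)= -\frac{\ii}{2\pi}\,\pt\ol\pt\log|t|^{-2}= \frac{\ii}{2\pi}\,\pt\ol\pt\log|t|^2 = [D].
$$
This gives the second equality. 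Positivity and integrality of the multiplicities of $D$ come directly from Proposition~\ref{pr:comphQ}, where $D$ was produced as an effective divisor on $C$.

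The only point that needs real checking is the sign and normalization in the Poincar\'e--Lelong step — i.e.\ verifying that the metric denoted $|\sigma|^{-2}$ is indeed the one whose Chern current is the positive integration current $[D]$ (not $-[D]$). This is routine but worth writing out explicitly because $h^Q_\xi$ has been extended across $D$ as a singular hermitian metric whose vanishing order is prescribed by $D$, so one must be careful that the singular contribution on the $\lambda_f(\xi)$-side is exactly compensated by the $\cO_C(D)$-factor, leaving a genuine positive integration current. No deeper input is needed beyond Theorem~\ref{th:wpC}, Proposition~\ref{pr:comphQ}, and Poincar\'e--Lelong.
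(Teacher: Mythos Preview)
Your proposal is correct and follows exactly the route the paper intends: the paper presents the corollary as an immediate consequence of Theorem~\ref{th:wpC} combined with Proposition~\ref{pr:comphQ}, and you have simply written out that deduction explicitly, including the Poincar\'e--Lelong step and the sign check.
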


\subsection{Singular base curves}
We now discuss the situation, where the base curve $C$ is allowed to be singular.

The statement of this section remains valid, if $C$ is a singular, compact, locally irreducible, complex curve in the following sense.

Let $\nu:\wh C \to C$ be the normalization, and $\wh \cX = \cX \times_C \wh C$. As above we take a desingularization $\mu:\wh \cY \to \wh \cX$:
\vspace{-2mm}
$$
\xymatrix{
\wh \cY \ar[r]^\mu\ar[dr]_{\wt f} & \wh\cX \ar[d]^{\wh f} \ar[r]^{\wh\nu}  & \cX \ar[d]^f  \\ & \wh C \ar[r]^\nu & C.
}
$$
We use that fact that according to \cite[Section 12]{f-s} the BGS-Theorem is valid on the locus where the given family is smooth. The base change theorem provides us with a morphism $\nu^* \uu R f_* \xi \to \uu R \wh f _*(\wh\nu^* \xi)$, in particular we get a morphism $\nu^* \lambda_f(\xi) \to \lambda_{\wh f}(\wh\nu^* \xi)$, and like in Proposition~\ref{pr:comphQ} there is a morphism $\lambda_{\wh f}(\wh\nu^*\xi) \to \lambda_{\wt f}(\mu^*\nu^*\xi)$.

Let $C'\subset C$ be the locus of non-singular fibers of $f$. We know from \cite{f-s} that \eqref{eq:BGS} holds also at the singularities of $C'$. Hence there is an effective divisor $D'$ supported on $\nu^{-1}(C\backslash C')$, and a canonical section $\sigma'$ of $\cO_{\bf \wh C}(D')$ such that
\begin{equation}\label{eq:comphQg}
h^Q_{\mu^*\wh\nu^*\xi} \cdot|\sigma'|^2 = h^Q_{\wh\nu^*\xi}\, .
\end{equation}
This equation implies the existence of a singular hermitian metric $h^Q_\xi$ with $\nu^*h^Q_{\xi}   =h^Q_{\wh\nu^*\xi}$.

Again, because of the existence of a continuous potential for a \wp metric the pullback $\nu^*\omega^{WP}_{C}$ is well-defined. We now are able to include the locus of singular fibers of $f$, Hence
\begin{equation}\label{eq:wpchat}
 \omega^{WP}_{\wh C} =  \nu^* \omega^{WP}_{C} = \nu^*c_1(\lambda_f(\xi), h^Q_{\xi}) + [D'].
\end{equation}
One can see that \eqref{eq:wpchat} yields an analogous statement to Corollary~\ref{co:wp}:
$$
\omega^{WP}_C =  c_1(\lambda(\xi), h^Q_\xi) + [D],
$$
where the current of integration $[D]= \sum m_j [p_j]$, $m_j\in \mathbb N$, in \eqref{eq:owpC} has to be replaced by a similar current with $m_j\in \mathbb Q$, $m_j>0$. Again, $h^Q_\xi$ is a singular hermitian metric (which is not necessarily of positive curvature): In order to see this, we take a high order multiple $mD$, which descends to the curve $C$.

\section{Singular hermitian metrics and residues}
\begin{proposition}[{\cite{sch:inv12,sch:arxiv}}]\label{pr:ext}
Let $Z$ be a complex manifold and $B\subset Z$ a smooth, connected divisor. Let $Z'=Z\backslash B$. Let $(\cL',h')$ be a holomorphic line bundle together with a singular hermitian metric over $Z'$ of positive curvature in the sense of currents. Then $(\cL',h')$ can be extended to $Z$ as holomorphic line bundle $(\cL,h)$ equipped with a singular hermitian metric (of positive curvature) provided the Chern form $c_1(\cL',h')$ extends to $Z$ as a positive current $\omega$. In this case
\begin{equation}\label{eq:extmf}
  c_1(\cL,h)=\omega + \alpha[B]
\end{equation}
where $[B]$ denotes the current of integration along $B$, and $0\leq \alpha <1$.
\end{proposition}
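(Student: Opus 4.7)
The argument is local near the smooth divisor $B$. The plan is to fix $p\in B$, a coordinate polydisk $U\subset Z$ in which $B\cap U=\{t=0\}$, and any holomorphic extension $\cL_0$ of $\cL'|_{U\cap Z'}$ to $U$ (which exists because $U$ is Stein; any two such extensions differ by a twist with $\cO_U(nB)$ for some $n\in\mathbb Z$). Trivializing $\cL_0$ by a local frame $e$, the singular hermitian metric reads $h'(e,e)=e^{-\phi}$ with $\phi$ psh on $U\cap Z'$; by hypothesis $\omega$ admits a local psh potential $\psi$ on $U$ with $\omega=(2\pi)^{-1}\ddb\psi$.

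Since $c_1(\cL',h')=\omega$ on $Z'$, the function $u:=\phi-\psi$ is pluriharmonic on $U\cap Z'$. The structural step is the decomposition
\[
u \;=\; a\log|t|^{2}+\mathrm{Re}(g),
\]
for a unique $a\in\R$ and some $g$ holomorphic on $U$: the real coefficient $a$ arises as the period of $d^{c}u$ around a small loop encircling $B$ (and is forced to be real by single-valuedness of $u$), while extendibility of $g$ across $B$ follows from the combined regularity of $\phi$ and $\psi$, since Laurent terms of $g$ with negative $t$-order would produce oscillations of $\mathrm{Re}(g)$ near $B$ incompatible with $u$ being the difference of two psh functions with controlled behaviour along $B$.

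Given this decomposition, I would modify the local data in two steps: first change the frame by $e\mapsto e^{g/2}\cdot e$ (a nowhere-vanishing holomorphic change of trivialization on $U$) to absorb $\mathrm{Re}(g)$; second, twist the extension by $t^{n}$ with $n:=\lfloor a\rfloor$, i.e.\ replace $\cL_0$ by $\cL:=\cL_0(-nB)$. Setting $\alpha:=a-n\in[0,1)$, the new local weight becomes $\phi_{\mathrm{new}}=\psi+\alpha\log|t|^{2}$, which is psh on $U$ because $\alpha\geq 0$ and $\log|t|^{2}$ is psh. Thus $h:=e^{-\phi_{\mathrm{new}}}$ defines a singular hermitian metric of positive curvature on $\cL$, and the Poincar\'e--Lelong formula $(2\pi)^{-1}\ddb\log|t|^{2}=[B]$ yields $c_1(\cL,h)=\omega+\alpha[B]$. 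Because $B$ is connected, the residue $\alpha$ is globally a single real constant, while the integer shifts $n$ match across overlapping charts via transition sections of $\cO_Z(B)$, so the local extensions glue to global $(\cL,h)$.

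The main obstacle is justifying the holomorphic extendibility of $g$ across $B$ in the structural step; this is where the positivity hypotheses -- psh-ness of $\phi$ on $Z'$ together with the existence of the ambient psh potential $\psi$ on all of $U$ -- enter essentially, since without them one obtains at best a meromorphic $g$ with poles along $B$, which would obstruct the clean absorption of $\mathrm{Re}(g)$ into a frame change and hence the extension of the metric.
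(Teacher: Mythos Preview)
The paper does not give its own proof of this proposition; it is quoted from \cite{sch:inv12,sch:arxiv}. I therefore evaluate your argument directly.

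Your outline is correct in spirit, but the step you yourself flag as ``the main obstacle'' is a real gap, and the justification you offer for it is wrong. The holomorphic function $g$ in the decomposition $u=a\log|t|^{2}+\mathrm{Re}(g)$ need not extend across $B$. Take $Z=\Delta$, $B=\{0\}$, $\cL'=\cO_{\Delta^{*}}$ with weight $\phi=\mathrm{Re}(1/t)$: this function is harmonic on $\Delta^{*}$, hence psh, with curvature zero, so $\omega=0$ and $\psi=0$ are admissible. Then $u=\mathrm{Re}(1/t)$, $a=0$, and $g=1/t$ has a pole. Plurisubharmonicity of $\phi$ on $Z'$ does not forbid oscillating negative--order Laurent terms, since $\mathrm{Re}(1/t)$ is itself psh. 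The paper's Lemma~\ref{le:extL1} makes exactly this point: even under an $L^{1}_{\mathrm{loc}}$ hypothesis on the pluriharmonic function, a simple pole $a_{-1}(z')/z^{1}$ survives in the decomposition. Your related assertion that two local extensions of $\cL'$ differ only by a twist with $\cO_U(nB)$ is likewise incorrect; as the paper emphasizes just before Theorem~\ref{th:flat}, the restriction isomorphism ``may involve transcendental holomorphic functions'', so local extensions are parametrized by $\cO^{*}(U\setminus B)/\cO^{*}(U)$, which is far larger than~$\mathbb Z$.

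The repair is not to argue that $g$ extends, but to absorb \emph{all} of $g$ --- principal part included --- into the frame change $e\mapsto e^{g/2}e$ on $U\setminus B$. This is a legitimate nowhere--vanishing holomorphic gauge transformation there; it does not extend to $U$ as an automorphism of your $\cL_0$, but it singles out a \emph{different} extension $\cL$ of $\cL'$ (namely the one for which $e^{g/2}e$ is the restriction of a frame over $U$). In that frame the weight is $\psi+a\log|t|^{2}$, and your twist by a power of $t$ together with Poincar\'e--Lelong then gives $c_1(\cL,h)=\omega+\alpha[B]$ with $\alpha\in[0,1)$. In the counterexample above this produces $(\cO_\Delta,1)$ via the transcendental identification $1\mapsto e^{1/(2t)}$, with $\alpha=0$. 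Gluing the local extensions (constancy of $\alpha$ along the connected divisor $B$, and compatibility of the transcendental gauges on overlaps) still has to be written out, but that part is manageable once the local model is corrected.
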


The general version of the result is the following.
\begin{theorem}[{\cite{sch:inv12}, cf.\ \cite[Theorem~1]{sch:arxiv}}]\label{th:mainext}
Let $Y$ be a reduced complex space and $A\subset Y$ a closed analytic subset. Let $(L',h')$ be a holomorphic line bundle on $Y'= Y\backslash A$ equipped with a singular hermitian metric whose Chern current $c_1(L',h')$ is positive. We assume that there exists a desingularization of $Y$ such that the pullback of $\omega_{Y'}$ extends as a positive, closed $(1,1)$-current.

Then there exists a modification $\tau: \wt Y\to Y$, which is an isomorphism over $Y'$ such that $(L',h')$ extends to $\wt Y$ as a holomorphic line bundle equipped with a singular hermitian metric, whose curvature form is a positive current.
\end{theorem}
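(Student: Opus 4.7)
The plan is to reduce Theorem~\ref{th:mainext} to iterated applications of Proposition~\ref{pr:ext} on a suitable log resolution, and then to extend across a codimension-two set by a Hartogs-type argument.

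First, using the desingularization assumption together with further blowups centered in $A$, I would obtain a modification $\tau:\wt Y\to Y$ such that $\wt Y$ is a complex manifold, $\tau$ is an isomorphism over $Y'$, and $\tau^{-1}(A)=\bigcup_{i=1}^k B_i$ is a simple normal crossings divisor with smooth components $B_i$. Let $\wh\omega$ denote the positive closed $(1,1)$-current on $\wt Y$ that extends $\tau^*c_1(L',h')$; its existence on $\wt Y$ follows from the hypothesis and is preserved under the additional blowups with smooth centers. The pulled back bundle $(\tau^*L',\tau^*h')$ then lives on $\wt Y\setminus\bigcup_i B_i$.

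Second, on the open set $U=\wt Y\setminus\bigcup_{i<j}(B_i\cap B_j)$, whose complement has codimension $\ge 2$, the divisor $\bigcup_i(B_i\cap U)$ is a disjoint union of smooth connected divisors. I would apply Proposition~\ref{pr:ext} once for each component. After extending across $B_1$, one obtains a singular hermitian line bundle whose Chern current equals $\wh\omega+\alpha_1[B_1]$ with $0\le\alpha_1<1$; this is again a positive closed current on $\wt Y$, so the hypothesis of Proposition~\ref{pr:ext} is satisfied for the next component and the iteration continues. After $k$ steps, one arrives at a holomorphic line bundle $(\wt L,\wt h)$ on $U$ with
\[
c_1(\wt L,\wt h)=\wh\omega+\sum_{i=1}^k\alpha_i[B_i],\qquad 0\le\alpha_i<1.
\]
To pass from $U$ to all of $\wt Y$, I would extend the line bundle by the second Riemann extension theorem (the transition functions extend holomorphically across the codimension-two set $\bigcup_{i<j}(B_i\cap B_j)$), and extend the metric by observing that $\wt h$ is locally of the form $e^{-\varphi}$ with $\varphi$ plurisubharmonic; \psh{} functions extend uniquely across analytic subsets of codimension $\ge 2$. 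The resulting Chern current is still $\wh\omega+\sum_i\alpha_i[B_i]$, which is positive.

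The main obstacle I anticipate is the coherence of the iteration: one must check that the locally defined extensions of Proposition~\ref{pr:ext} across the various components $B_i$ glue into a bona fide global line bundle on $U$, and that the outcome is independent of the order in which the $B_i$ are processed. Locally in adapted polydisc coordinates with $B_i=\{z_i=0\}$, the weight $\alpha_i$ is determined intrinsically by the generic Lelong number along $B_i$ of the local \psh{} potential of the extended current. Since the components $B_j\cap U$ with $j\neq i$ have support disjoint from a generic point of $B_i\cap U$, these Lelong numbers depend only on $\wh\omega$ and not on the order of extension, which yields both the order-independence and the patching needed for the global conclusion.
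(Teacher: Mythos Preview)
The paper does not give its own proof of Theorem~\ref{th:mainext}; it is quoted from \cite{sch:inv12,sch:arxiv}, and the paragraph following the statement merely indicates that one chooses $B=\tau^{-1}(A)$ as a simple normal crossings divisor and reduces to Proposition~\ref{pr:ext} via a desingularization of the logarithmic pair. Your proposal carries out exactly this scheme: log resolution, iterated application of Proposition~\ref{pr:ext} across the smooth components on the complement of the codimension-two crossing locus, and then Hartogs-type extension of both the line bundle and the plurisubharmonic weight across that locus. This is the intended argument, and the decomposition $c_1=\wh\omega+\sum_i\alpha_i[B_i]$ with $0\le\alpha_i<1$ that you obtain is precisely the formula \eqref{eq:extmf} iterated, which is also what the paper uses later (cf.\ \eqref{eq:ext}).

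Two small points to tighten. First, for the extension of $\wt L$ from $U$ to $\wt Y$ it is cleaner to invoke that on a complex manifold the restriction $\mathrm{Pic}(\wt Y)\to\mathrm{Pic}(U)$ is an isomorphism when $\wt Y\setminus U$ has codimension $\ge 2$; saying that the transition functions extend by the second Riemann extension theorem is not quite enough without also arguing that the extensions are nowhere vanishing. Second, your justification of order-independence via generic Lelong numbers is correct, but you should make explicit that Proposition~\ref{pr:ext} is being applied locally and that the resulting local extensions patch because the number $\alpha_i$ is the (constant) generic Lelong number of the global current $\wh\omega$ along $B_i$, hence intrinsic. With these clarifications your argument is complete and coincides with the approach indicated by the paper.
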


In the situation of the Theorem let $B=\tau^{-1}(A)$; this set can be chosen as a simple normal crossings divisor with components $B_i$.  In order to relate the statement to Proposition~\ref{pr:ext} a desingularization $\mu: Z \to \wt Y$ of the logarithmic pair $(\wt Y, B)$ is necessary, and the pull-back of the line bundle to $Z'$.

Note that an {\em extension} of a holomorphic line bundle $\cL'$ from $Z'$ to $Z$ is a holomorphic line bundle $\cL$, whose restriction to $Z'$ is isomorphic to $\cL'$, where the isomorphism is taken in the holomorphic category, and may involve transcendental holomorphic functions.

In this section we will discuss the problem to what extent such an extension is unique.

\medskip

\noindent {\bf Setup.}
\textit{Let $(\cL,h)$ be a holomorphic line bundle over a complex manifold $Z$ equipped with a singular hermitian metric, whose Chern form is trivial when restricted to $Z'=Z\backslash B$. Here $B$ denotes a connected, smooth divisor, and later a simple normal crossings divisor.}

We first treat the local, smooth situation: Let $\Delta\subset\C$ and $\Delta^*\subset\C$ be the unit disk, and the punctured unit disk resp. Let $B=\{0\}\times\Delta^{n-1}$, and let $[B]$ be the current of integration along $B$. On $\Delta^*\times \Delta^{n-1}$ and $\Delta^n$ we use coordinates $z=(z^1,z')=(z^1,z^2,\ldots,z^n)$.

Given a holomorphic function $a_{-1}(z')\in \cO(\{0\}\times \Delta^{n-1})$ we define a current $T$ as follows. For a differentiable function $\varphi(z)$ with compact support on $\Delta^n$ we set
$$
T(\varphi)= \int_{B} a_{-1}(z')\frac{\pt \varphi(z)}{\pt z^1} dV_{n-1}
$$
where $dV_{n-1}= \frac{(\ii)^{n-1}}{2^{n-1}}dz^2\we \ol{dz}^2\we\ldots \we dz^n\we \ol{dz}^n$.
\begin{lemma}\label{le:extL1}
Any function $\chi \in L^1_{\mathrm{loc}}(\Delta^n,\R)$, whose restriction to \break  $\Delta^*\times \Delta^{n-1}$ is pluriharmonic can be written as
\begin{equation}\label{eq:extchi}
  \chi= \beta\log(|z^1|^2) + \mathrm{Re}(a_{-1}(z')/z^1) + \mathrm{Re}(F(z^1,z')),
\end{equation}
where $\beta\in \R$, $a_{-1}\in \cO(\{0\}\times \Delta^{n-1})$, and $F\in \cO(\Delta^n)$. In particular
\begin{equation}\label{eq:extddbchi}
  {\tiny\frac{\ii}{2\pi}\pt\ol\pt} \chi = \beta [B] + T,
\end{equation}
where $T$ is as above.
\end{lemma}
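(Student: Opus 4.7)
The plan is to exploit that the only obstruction to writing a pluriharmonic function on the non-simply-connected punctured polydisk $\Delta^*\times\Delta^{n-1}$ as the real part of a holomorphic function is a period around the loop encircling $B$, which forces a $\log|z^1|^2$ summand; the $L^1_{\mathrm{loc}}$ hypothesis then cuts off the Laurent tail of the remaining holomorphic function at order $-1$, and a Poincar\'e--Lelong-style computation yields \eqref{eq:extddbchi}.

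First I would extract the logarithm via a period. Since $\chi$ is pluriharmonic on $\Delta^*\times\Delta^{n-1}$, the real $1$-form $d^c\chi$ is closed. For a small loop $\gamma_\epsilon(\theta)=(\epsilon e^{i\theta},z'_0)$, the integral $\oint_{\gamma_\epsilon}d^c\chi$ is independent of $\epsilon\in(0,1)$ and locally constant in $z'_0$, hence a single real number; writing it as $4\pi\beta$ with the normalisation matching $\frac{\ii}{2\pi}\partial\bar\partial\log|z^1|^2=[B]$ defines the coefficient $\beta\in\R$. Setting $\chi_0=\chi-\beta\log|z^1|^2$, the function $\chi_0$ is pluriharmonic on $\Delta^*\times\Delta^{n-1}$ with $d^c\chi_0$ exact, and a standard local-to-global argument produces $g\in\cO(\Delta^*\times\Delta^{n-1})$ with $\chi_0=\mathrm{Re}(g)$. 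Laurent-expanding $g=\sum_{k\in\mathbb Z}a_k(z')(z^1)^k$ with $a_k\in\cO(\Delta^{n-1})$, Fourier analysis on $\{|z^1|=r\}$ shows that the Fourier coefficient of $\mathrm{Re}(g)$ at frequency $-k_0$ equals $\tfrac12(a_{-k_0}(z')r^{-k_0}+\overline{a_{k_0}(z')}r^{k_0})$, which for $k_0\ge 2$ has leading term $r^{-k_0}$ as $r\to 0$. Integrating the pointwise lower bound $\int_0^{2\pi}|\mathrm{Re}(g)|\,d\theta\ge\tfrac{\pi}{2}|a_{-k_0}(z')|r^{-k_0}$ against $r\,dr\,dV(z')$ over a set where $a_{-k_0}\not\equiv 0$ yields the divergent integral $\int_0^{r_0}r^{1-k_0}\,dr$, contradicting $\chi\in L^1_{\mathrm{loc}}$. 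Hence $a_k\equiv 0$ for $k\le-2$ and $g=a_{-1}(z')/z^1+F(z)$ with $F=\sum_{k\ge 0}a_k(z')(z^1)^k\in\cO(\Delta^n)$, which is \eqref{eq:extchi}.

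Next I would apply $\frac{\ii}{2\pi}\partial\bar\partial$ distributionally, term by term. Poincar\'e--Lelong gives $\frac{\ii}{2\pi}\partial\bar\partial\log|z^1|^2=[B]$; the pluriharmonic term $\mathrm{Re}(F)$ contributes nothing; and for the residual term I would write $\mathrm{Re}(a_{-1}/z^1)=\tfrac12(a_{-1}(z')/z^1+\overline{a_{-1}(z')}/\bar z^1)$ and invoke the fundamental-solution identities $\bar\partial(1/z^1)=\pi\,\delta_{\{z^1=0\}}\,d\bar z^1$ and $\partial(1/\bar z^1)=\pi\,\delta_{\{z^1=0\}}\,dz^1$, followed by one more application of $\partial$ respectively $\bar\partial$. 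An integration by parts against a test function and the identification of $a_{-1}(z')\,\partial/\partial z^1$ with a local holomorphic frame of $N_{B|\Delta^n}$ recovers $T$ exactly, giving \eqref{eq:extddbchi}.

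Main obstacle: The period extraction, Laurent pruning, and Poincar\'e--Lelong are classical, so the only genuine work lies in the final distributional computation. One must carefully track the factors of $\pi$ coming from $\bar\partial(1/z)=\pi\delta_0$, the normalising factor $\ii/(2\pi)$, and the conversion between the complex normal-bundle section $a_{-1}\,\partial/\partial z^1$ and its real interior product against $d\varphi$ that appears in $T$, so that \eqref{eq:extddbchi} comes out exactly and not merely up to a multiplicative constant or a complex conjugate.
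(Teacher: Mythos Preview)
Your proposal is correct and follows essentially the same route as the paper: extract the monodromy coefficient $\beta$ so that $\chi-\beta\log|z^1|^2=\mathrm{Re}(g)$ for a holomorphic $g$ on $\Delta^*\times\Delta^{n-1}$, use the $L^1_{\mathrm{loc}}$ hypothesis to kill all Laurent coefficients below order $-1$, and then compute $\frac{\ii}{2\pi}\partial\bar\partial$ of the $a_{-1}(z')/z^1$ term via the Cauchy kernel identity $\bar\partial(1/z)=\pi\delta_0\,d\bar z$. The paper is terser at each step (it simply asserts the monodromy and the Laurent truncation, then writes the one-variable identity as $-\ddb(1/z)=2\pi\,\partial/\partial z|_{z=0}$ and passes to general $n$ by Fubini and an explicit contraction computation), but your more explicit period argument and Fourier estimate fill in exactly what the paper leaves implicit.
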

\begin{proof}
The first contribution in \eqref{eq:extchi} reflects the monodromy: For suitably chosen $\beta$ we have $\chi- \beta\log(|z^1|^2) = \mathrm{Re}(f)$, for some holomorphic function $f\in\cO(\Delta^*\times \Delta^{n-1})$. Since $f\in L^1_{\mathrm{loc}}(\Delta^*\times \Delta^{n-1})$ the principal part of $f$ is equal to $a_{-1}(z')/z^1$ so that $f(z)=a_{-1}(z')/z^1 +F(z)$. The term $\beta\log(|z^1|^2)$ yields the current of integration $\beta [B]$.

We now set $n=1$, $z=z^1$, and let $\varphi(z) \in C^\infty_0(\Delta,\R)$ be a function with compact support. Then
\begin{equation}\label{eq:cauchy1}
\frac{\pt \varphi}{\pt z}(0) = -\int_\Delta \frac{1}{\pi z} \frac{\pt^2\varphi}{\ol{\pt z}\pt z} \frac{\ii}{2}dz\we\ol{dz}.
\end{equation}
We interpret this equation as
$$
-\ddb\left(\frac{1}{z}\right)= 2\pi \frac{\pt}{\pt z}\Big|_{z=0}
$$
in the sense of currents.

For arbitrary $n$ let the letter $\varphi$  denote a real $(2n-2)$-form with compact support on $\Delta^n$. We consider the integral
\begin{equation}\label{eq:cauchy}
-{\frac{1}{2\pi}}\int_{\Delta^n} \frac{f(z)}{z^1} \ddb \varphi
\end{equation}
and apply the Fubini-Tonelli theorem. Write
$$
\varphi= \sum (-1)^{i+j}   \varphi_{i\ol{\jmath}}(z)\,dz^1\we\ldots\we \wh{dz^i}\we\ldots\we dz^n\we dz^{\ol 1} \we\ldots \we \wh{dz^\ol{\jmath}}\we\ldots\we dz^{\ol n}.
$$
Because of Stokes theorem the only  possibly non-vanishing contribution in \eqref{eq:cauchy} comes from the coefficient $\varphi_{i,\ol 1}$.
Now \eqref{eq:cauchy1} implies that \eqref{eq:cauchy} is equal to
$$
 c_n\int_{\Delta^{n-1}} a_{-1}(z')
\sum\frac{\pt \varphi_{i,\ol 1}}{\pt z^1}(0,z')dV_{n-1},
$$
with $c_n= 2^{n-1}(\ii)^{n-1} (-1)^{n(n-1)}$.
On the other hand, the contraction
\begin{equation}\label{eq:contr}
f(z)\frac{\pt}{\pt z^1} \,\llcorner\, \pt \varphi\Big|_{\{0\}\times \Delta^{n-1}}
\end{equation}
restricted to $\{0\}\times \Delta^{n-1}$ as a differential form equals
$$
c_n \sum a_{-1}(z') \frac{\pt \varphi_{i \ol 1}}{\pt z^i}(0,z') dV_{n-1}.
$$
Again the $(n,n-2)$- and $(n-2,n)$-components of $\varphi$ do not contribute to the integral along $\Delta^{n-1}$ of the term \eqref{eq:contr}.
\end{proof}
The lemma generalizes to the complement of a snc divisor: Let $\chi \in L^1_{\mathrm{loc}}(\Delta^n,\R)$ be a function, whose restriction to $(\Delta^*)^k\times \Delta^{n-k}=\{(z^1,\ldots,z^n)\}$ is pluriharmonic.
\begin{lemma}\label{le:extL1snc}
Let $\chi$ be as above. Then
\begin{equation}\label{eq:extL1snc}
  \chi= \sum_{j=1}^{k}\beta_j\log|z^j|^2 + \mathrm{Re}(\sum_{j=1}^{k} G_j/z^j  )
\end{equation}
  where $\beta_j\in \R$, and $G_j\in \cO(\Delta^n)$. In particular
  \begin{equation}
    {\ii}{\pt\ol\pt} \chi = \sum_j[B_j] + T
  \end{equation}
  where $[B_j]$ is the current of integration along $V(z^j)$, $j=1,\ldots,k$ and $T=\sum T_j$ with $T_j$ analogous to $T$ in \eqref{eq:extddbchi}.
\end{lemma}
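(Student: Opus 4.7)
The natural strategy is induction on $k$ using Lemma~\ref{le:extL1} as the base case (with the harmless term $\mathrm{Re}(F)$ in that lemma absorbed into $G_1$ up to a globally pluriharmonic summand that does not affect $\ddb\chi$). For the inductive step, note that $\chi$ pluriharmonic on $U := (\Delta^*)^k \times \Delta^{n-k}$ is the real part of a multivalued holomorphic function $f$ on $U$: passing to the universal cover, a lift of $\chi$ equals $\mathrm{Re}(\tilde f)$ for a single-valued holomorphic $\tilde f$. Since $\chi$ itself is single-valued, the monodromy of $f$ around each $\{z^j = 0\}$, $1 \le j \le k$, is a purely imaginary constant, say $2\pi i \beta_j$ with $\beta_j \in \R$. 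Setting $g := f - \sum_{j=1}^k \beta_j \log z^j$ therefore defines a single-valued holomorphic function on $U$ with
$$
\chi \;=\; \sum_{j=1}^k \beta_j \log|z^j|^2 + \mathrm{Re}(g).
$$

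The second step is to decompose $g$ into simple-pole and holomorphic pieces. Expand $g$ as a Laurent series in $z^1,\ldots,z^k$ with coefficients holomorphic in $(z^{k+1},\ldots,z^n)$. Because $\chi \in L^1_{\mathrm{loc}}(\Delta^n)$ and $\log|z^j|^2 \in L^1_{\mathrm{loc}}$, we have $g \in L^1_{\mathrm{loc}}$ as well. Applying Lemma~\ref{le:extL1} slice-by-slice in the $z^1$-variable (with parameters $z^2,\ldots,z^n$) extracts a principal part $G_1(z)/z^1$ with $G_1$ holomorphic across $\{z^1=0\}$, leaving a remainder that extends holomorphically in $z^1$; iterating in $z^2,\ldots,z^k$ successively peels off the contributions $G_j(z)/z^j$. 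The formula \eqref{eq:extL1snc} then holds modulo a globally pluriharmonic term, which one absorbs into the $G_j$'s. The current identity follows from linearity together with Lemma~\ref{le:extL1}: the $\beta_j\log|z^j|^2$ produces $\beta_j[B_j]$, each $\mathrm{Re}(G_j/z^j)$ produces $T_j$ (computed exactly as in \eqref{eq:extddbchi} with $z^1$ replaced by $z^j$ and $dV_{n-1}$ adapted accordingly), and pluriharmonic pieces contribute nothing.

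The main obstacle is to rule out \emph{mixed} polar terms of the form $c/(z^iz^j)$ with $i\neq j$, which are likewise $L^1_{\mathrm{loc}}$ and pluriharmonic on $U$ but cannot be written as $G_i/z^i + G_j/z^j$ with $G_i,G_j$ holomorphic on $\Delta^n$. Such terms would generate currents supported on the higher-codimension strata $B_i \cap B_j$, which do not appear on the right-hand side of the claimed $\ii\ddb\chi$. They must be excluded either by the one-variable Cauchy-type argument used in Lemma~\ref{le:extL1} being applied coherently in each variable (so that the holomorphic-in-$z^1$ remainder after the first reduction, being itself $L^1_{\mathrm{loc}}$ and pluriharmonic in $(z^2,\ldots,z^k)$ across $\{z^1=0\}$, inductively inherits the structure), or, equivalently, by reading the lemma modulo currents supported in codimension $\geq 2$. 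Either interpretation yields \eqref{eq:extL1snc} and the current formula; in the intended application via Yoshikawa's theorem the extra regularity of $\chi = -\log h^Q_\xi$ on the desingularization precludes mixed simple poles and the induction goes through without modification.
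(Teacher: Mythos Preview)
The paper gives no proof of this lemma; it is stated as a direct generalization of Lemma~\ref{le:extL1}. Your approach --- extract the monodromy constants $\beta_j$ via the universal cover, then peel off the principal parts in each $z^j$ by the one-variable argument --- is exactly the natural route and is what the authors presumably intend.

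Your identification of the mixed-pole obstruction is correct and is a genuine gap in the lemma \emph{as literally stated}. The function $\chi=\mathrm{Re}\bigl(1/(z^1z^2)\bigr)$ on $\Delta^2$ is pluriharmonic on $(\Delta^*)^2$ and lies in $L^1_{\mathrm{loc}}(\Delta^2)$, yet it cannot be written as $\mathrm{Re}(G_1/z^1+G_2/z^2)$ with $G_1,G_2\in\cO(\Delta^2)$: the equation $G_1z^2+G_2z^1=1$ has no holomorphic solution at the origin. So the decomposition \eqref{eq:extL1snc} fails in general, and $\ii\pt\ol\pt\chi$ then carries an extra current supported on the codimension-two stratum $B_1\cap B_2$. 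Your two suggested readings are both reasonable salvages: either interpret the lemma modulo currents supported in codimension $\geq 2$ (which is all that is needed for Theorem~\ref{th:flat}, stated there ``in codimension two''), or note that in the actual application the potentials $-\log h^Q_\xi$ have at worst logarithmic singularities along the $B_j$ by Yoshikawa's theorem, so mixed simple poles do not arise. Either way, your inductive argument then goes through and yields the current identity with the $T_j$ defined as in Lemma~\ref{le:extL1}.
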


\begin{definition}
Let $Z$ be a complex manifold an $B$ a snc divisor. Let $v$ be a section of the normal bundle $N_{B|Z}$, and denote by $\llcorner$ the contraction of a vector field and a differential form.

Then we define a current $T_v$ of dimension $(1,1)$ on $Z$ by
\begin{equation}\label{eq:T_v}
    T_v(\varphi)= S_v(d\varphi),
\end{equation}
where
$$
S_v(\psi)= \int_B \mathrm{Re}(v)\,\llcorner \, \psi
$$
for any $(2n-1)$-form with compact support.
\end{definition}

At this point, we assume for simplicity that $B$ consist of one component.

We want to study holomorphic line bundles $(\cL,h)$, equipped with a singular hermitian metric, whose restriction to $Z'=Z\backslash B$ is isomorphic to the trivial bundle $(\cO_{Z'},1)$ equipped with the trivial hermitian metric. We consider  a suitable covering of $Z$ by coordinate neighborhoods $(U_i,(z^1_i,\ldots,z^n_i))$ with $B\cap U_i=V(z^1_i)$ and $z'_i=(z^2_i,\ldots,z^n_i)$ and generating holomorphic sections $e_i$ of $\cL|U_i$. We set $h_i=\|e_i\|^2_{h}$, and $\chi_i= -\log h_i$. We will use the term of a flat $\mathbb Q$-line bundle on $Z$ for an element of $\mathrm{Pic}(Z)\otimes \mathbb Q$ such that an integer multiple is induced by an ordinary line bundle that possesses a flat structure.

\begin{theorem}\label{th:flat}
Let $(\cL,h)$ be a holomorphic line bundle on $Z$ equipped with a singular hermitian metric, whose restriction to $Z'=Z\backslash B$ is isomorphic to the trivial line bundle $(\cO_{Z'},1)$ carrying the trivial hermitian metric.

Then the line bundle $\cL$ on $Z$ carries a flat structure, or the following statements hold.
\begin{itemize}
\item[(i)]
As a $\mathbb Q$-line bundle the line bundle $\cL$ satisfies
$$
\cL\simeq \cO_Z(\beta\cdot B) \otimes L,
$$
where $\beta$ is a rational number, and $L$ a flat $\mathbb Q$-line bundle.
\item[(ii)]
We have
$$
\quad \chi_i = \beta \cdot \log(|z^1_i|^2) + 2 \mathrm{Re}\left(\frac{a_i(0,z'_i)}{z^1_i}\right),
$$
where
$$
v = a_i(0,z'_i) \frac{\pt}{\pt z^1_i}
$$
determines a holomorphic section of the normal bundle $N_{B|Z}$.
\item[(iii)]
The Chern current is equal to
\begin{equation}
  c_1(\cL,h)= \beta[B] + T_v.
\end{equation}
In particular, the closed current $T_v$ has support in $B$.
\end{itemize}
\end{theorem}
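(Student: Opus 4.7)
The plan is to bootstrap Lemma~\ref{le:extL1} from the local analysis on each coordinate chart to a global statement, using the cocycle of transition functions of $\cL$ to identify both the rational number $\beta$ and the normal bundle section $v$. First I would cover $Z$ by coordinate neighborhoods $(U_i,(z^1_i,z'_i))$ adapted to $B$, choose local holomorphic generators $e_i$ of $\cL|_{U_i}$, and write $\chi_i=-\log h_i$. Because $c_1(\cL,h)|_{Z'}=0$, each $\chi_i$ is pluriharmonic on $U_i\cap Z'$ and belongs to $L^1_{\mathrm{loc}}(U_i)$ as the weight of a singular hermitian metric, so Lemma~\ref{le:extL1} yields
\[
\chi_i=\beta_i\log|z^1_i|^2+\mathrm{Re}(a_i(z'_i)/z^1_i)+\mathrm{Re}(F_i(z_i)).
\]
Replacing the frame $e_i$ by $e_i\cdot\exp(F_i/2)$ absorbs the holomorphic contribution $\mathrm{Re}(F_i)$ into the trivialization, leaving exactly the form predicted by statement (ii) once we show that $\beta_i$ and $a_i$ are globally coherent.

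Next I would run the cocycle comparison on overlaps. The transition $g_{ji}$ with $e_j=g_{ji}e_i$ is a nowhere-vanishing holomorphic function on $U_i\cap U_j$, so $\chi_i-\chi_j=\log|g_{ji}|^2$ extends pluriharmonically across $B\cap U_i\cap U_j$. Writing $z^1_j=u_{ij}z^1_i$ for a holomorphic unit $u_{ij}$ on the overlap and comparing the two local decompositions shows that the logarithmic coefficients must coincide, producing a single number $\beta$; that $\beta\in\mathbb Q$ follows from the fact that a suitable integer power of $\cL$ is a genuine holomorphic line bundle, forcing the monodromy of $(z^1_i)^\beta$ to be of finite order. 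Likewise, comparing the simple-pole coefficients along $B$ forces the residues to transform by $a_i(0,z'_i)=a_j(0,z'_j)/u_{ij}(0,z'_j)$, which is precisely the cocycle relation for a section of the normal bundle $N_{B|Z}$, yielding the global $v=a_i(0,z'_i)\,\partial/\partial z^1_i$. Once $\cO_Z(\beta B)$ is split off as a $\mathbb Q$-line bundle, the residual transition functions have both trivial curvature and no singularities along $B$; they therefore define a flat $\mathbb Q$-line bundle $L$, establishing (i).

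For (iii) the current formula of Lemma~\ref{le:extL1} gives, locally on $U_i$,
\[
c_1(\cL,h)|_{U_i}=\tfrac{\ii}{2\pi}\partial\bar\partial\chi_i=\beta[B]+T_i,
\]
where $T_i(\varphi)=\int_{B\cap U_i}a_i(0,z'_i)\,(\partial\varphi/\partial z^1_i)\,dV_{n-1}$ comes from the Cauchy-type computation \eqref{eq:cauchy1}--\eqref{eq:cauchy}. The remaining task is to recognize that $\sum T_i$ (assembled via a partition of unity) equals the intrinsic current $T_v$ of \eqref{eq:T_v}: unpacking $T_v(\varphi)=\int_B\mathrm{Re}(v)\,\llcorner\,d\varphi$ and restricting $d\varphi$ to its $\partial\varphi$-part (the $\bar\partial\varphi$-part contributes the conjugate piece that reconstructs the real part), one obtains exactly the integrand $a_i(0,z'_i)\,\partial\varphi/\partial z^1_i$ on each chart.

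The main obstacle will be Step~2, namely the careful cocycle bookkeeping needed to disentangle $\beta$, $L$, and $v$ simultaneously: one must verify that the factor $u_{ij}^\beta$ (multi-valued for non-integral $\beta$) and the essential-singularity factor $\exp(a_i/(2z^1_i))$ can be separated cleanly from the flat transitions without introducing spurious monodromy. Once this is done, the identification of the distributional computation of Lemma~\ref{le:extL1} with the intrinsic $T_v$ is a routine change-of-variables check using that $v$ is well defined modulo $T_B$, so only the component transverse to $B$ contributes.
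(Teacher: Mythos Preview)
Your overall strategy matches the paper's proof: apply Lemma~\ref{le:extL1} on each chart, absorb the holomorphic remainder $F_i$ into the frame, and then compare on overlaps to force $\beta_i=\beta_j=:\beta$ and to obtain the normal-bundle cocycle for the residues $a_i$. The identification of the local distributions with the intrinsic current $T_v$ is also as in the paper.

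The genuine gap is your argument for $\beta\in\mathbb Q$. You write that ``a suitable integer power of $\cL$ is a genuine holomorphic line bundle, forcing the monodromy of $(z^1_i)^\beta$ to be of finite order,'' but $\cL$ is \emph{already} a holomorphic line bundle, and there is no monodromy to invoke: the transitions $\gamma_{ij}$ of $\cO_Z(B)$ are nowhere-vanishing on the overlaps, so $\gamma_{ij}^\beta$ is locally single-valued for any real $\beta$ once a branch of the logarithm is chosen. The relation $|g_{ij}|^2=|\gamma_{ji}|^{2\beta}$ by itself does not force rationality. The paper supplies the missing idea: pick any hermitian metric $k$ on $\cO_Z(B)$ and observe that $h_i\,k_i^\beta$ is a globally defined function, so that in real cohomology $c_{1,\R}(\cL)=-\beta\cdot c_{1,\R}(\cO_Z(B))$. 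Since both Chern classes lie in the image of $H^2(Z,\mathbb Z)$, either $c_{1,\R}(\cO_Z(B))=0$ --- in which case $c_{1,\R}(\cL)=0$ as well and $\cL$ is flat, which is precisely the first alternative in the theorem statement that your proposal does not address --- or $\beta\in\mathbb Q$. Only then, with $m\beta\in\mathbb Z$, do the constants $\eta_{ij}^m=(g_{ij}\gamma_{ij}^{-\beta})^m$ form an honest $U(1)$-cocycle giving the flat factor $L$.
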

The corresponding statement holds, if $B$ is a snc divisor by Lemma~\ref{le:extL1snc}.

\begin{proof}
Let $g_{ij}$ be transition functions for $\cL$ on $U_{ij}=U_i\cap U_j$ such that $e_i = g_{ij} e_j $. By \eqref{eq:extchi}
$$
\chi_i|(U_i\backslash B) = \beta_i \log(|z_i|^2) + 2 \mathrm{Re}\left(\frac{f_i}{z_i}\right)
$$
for certain holomorphic functions $f_i\in \cO_{U_i}(U_i)$, $\beta_i\in \R$, and the coordinate functions $z_i=z^1_i$ with $V(z_i)= B\cap U_i$ can be identified with canonical sections of $\cO_Z(B)$ over $U_i$.

We observe that the {\em holomorphic} contribution of $f_i(z^1_i,z'_i)/z^1_i$ can be disregarded, as these only amount to coordinate transformations of the fiber coordinate of the line bundle $\cL$. So we can assume that $f_j(z^1_j, z'_j)= f_j(0,z'_j)$.

Let the transition functions $\gamma_{ij}$ define the line bundle $\cO_Z(B)$ with respect to $\{U_i\}$ so that $z_i= \gamma_{ij} z_j$.

The equation $h_i = |g_{ij}|^2 h_j$ implies
\begin{eqnarray*}
  \log|g_{ij}|^2 &=&\chi_j-\chi_i\\ &=&(\beta_j-\beta_i)\log|z_j|^2 + \beta_i\log|\gamma_{ji}|^2 + 2 \mathrm{Re}\left(\frac{f_j-\gamma_{ji}f_i}{z_j}   \right)\, , \\
  |g_{ij}|^2&=&|z_j|^{2(\beta_j-\beta_i)} \cdot |\gamma_{ji}|^{2\beta_i} \cdot \left| \exp{\frac{f_j- \gamma_{ji}f_i}{z_j}}  \right|^2  .
\end{eqnarray*}
Since $g_{ij}$ is holomorphic and non-zero at all points of $B\cap U_{ij}$, this equation implies
$$
\beta_i=\beta_j=:\beta  \quad\text{ and }\quad f_j=\gamma_{ji}f_i \, .
$$
In particular
$$
|g_{ij}|^2=|\gamma_{ji}|^{2\beta},
$$
and the $f_i/z_i$ determine a holomorphic section of $\cO_Z(B)$. Let the given singular hermitian metric on $\cO_Z(B)$ be denoted by $k$ with $k_i=k|U_i$. Then the above equation implies that $h_i  k_i^\beta = h_j k_j^\beta = \varphi$ is globally defined, and hence $c_{1,\R}(\cL)= -\beta \cdot c_{1,\R}(\cO_Z(B))\in H^2_\R(Z,\mathbb Z)$. This implies $\beta\in \mathbb Q$, unless $c_{1,\R}(\cO_Z(B))= c_{1,\R}(\cL)=0$.

We are left with the case $\beta\in \mathbb Q$. We choose holomorphic logarithms of $g_{ij}$ and $\gamma_{ij}$.  (These are not unique). We can write
$$
g_{ij}=\gamma_{ij}^\beta \; \eta_{ij},
$$
where the $\eta_{ij}$ are constants of modulus one. So far we do not have cocycles on the right hand side. Let $m\cdot \beta \in \mathbb Z$. Then the constants $\eta^m_{ij}$ define a cocycle with values in $U(1)$, i.e.\ a flat line bundle.
\end{proof}
The analogous statement holds for a snc divisor  $B$.

\section{Application to  Douady spaces}
Let $\mathbf D$ be a component of the Douady space of subvarieties of a projective variety $Z$ that has a non-empty intersection with the locus of smooth subvarieties. The Douady space is taken in the category of {\em reduced} complex spaces. However, from this point on we will restrict ourselves to the category of {\it normal} parameter spaces and explain later, why this is sufficient.

Let $\cX \hookrightarrow Z\times \DD \to \DD$ denote the universal family.  We consider a very ample line bundle on $Z$, equipped with the Fubini-Study metric and its restriction  $(\cL,h)$ to $\cX$. As above, we denote by $\mathbf{D'}\subset \DD$ the locus of smooth subvarieties. Let $A = \DD\backslash \mathbf{D'}$.

We know from Section~\ref{se:WPCurr} and Section~\ref{se:appBGS}  that the  \wp current, which we denote by  $\omega^{WP}_\DD$,  extends the Chern form of $(\lambda(\xi)|\mathbf{D'},h^Q_\xi)$, and that it has a continuous plurisubharmonic potential. For any base change map $g:R\to \DD$ with $g(R)\not\subset A$ the current $\omega^{WP}_R=g^*\omega^{WP}_{\DD}$ is well-defined, and it plays the role of a \wp current for the pulled-back family.

In particular the Chern form of $(\lambda(\xi)|\mathbf{D'},h^Q_\xi)$ extends to any desingularization of $\DD$ as a positive current, and the assumptions of Theorem~\ref{th:mainext} are satisfied. This yields the following fact.
\begin{proposition}\label{pr:mfdext}
After taking a modification $\tau:\mathbf{\wt D} \to \DD$ that is an isomorphism over $\mathbf{D'}$, the determinant line bundle $(\lambda(\xi), h^Q_\xi)|\mathbf{D'}$ extends to a holomorphic line bundle $\lambda_{\mathbf{\wt D}}$ on $\wt\DD$ together with a singular hermitian metric $h_{\wt\DD}$, whose curvature current is positive in the sense of currents. Also, we may assume that $B:=\tau^{-1}(A)$ is a snc divisor with components $B_i$.

In codimension two
\begin{equation}\label{eq:ext}
c_1(\lambda_{\wt \DD},h_{\wt\DD}) =  \omega^{WP}_{\wt\DD} + \sum_j \beta_j [B_j]
\end{equation}
holds, where $\beta_j\in \R$, and the $[B_j]$ denote currents of integration along the divisors $[B_j]$.
\end{proposition}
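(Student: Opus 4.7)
I would apply the general extension theorem~\ref{th:mainext} to $(\lambda(\xi),h^Q_\xi)|\mathbf{D'}$, then compare its Chern current locally with the pulled-back \wp current using Lemma~\ref{le:extL1}. The hypotheses of Theorem~\ref{th:mainext} are in place: by~\eqref{eq:wpdetbdl} one has $c_1(\lambda(\xi)|\mathbf{D'},h^Q_\xi)=\omega^{WP}_{\mathbf{D'}}$, and Proposition~\ref{pr:contpot} extends $\omega^{WP}$ to $\DD$ as a positive current with continuous psh local potentials, which pull back to any desingularization of $\DD$ as a closed positive current. Theorem~\ref{th:mainext} then yields the modification $\tau:\wt\DD\to\DD$ (an isomorphism over $\mathbf{D'}$) and the extension $(\lambda_{\wt\DD},h_{\wt\DD})$ with positive curvature current; a further log-resolution lets us assume $B=\tau^{-1}(A)=\bigcup B_j$ is a snc divisor.

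Consider the closed $(1,1)$-current $\Theta:=c_1(\lambda_{\wt\DD},h_{\wt\DD})-\omega^{WP}_{\wt\DD}$. Both summands agree on $\wt\DD\setminus B\cong\mathbf{D'}$ with $\omega^{WP}_{\mathbf{D'}}$, so $\mathrm{supp}(\Theta)\subset B$. I would then restrict attention to the open set $\wt\DD\setminus\mathrm{Sing}(B)$, whose complement has codimension at least two --- this is precisely the meaning of ``in codimension two''. In a chart $U$ around a point of some $B_j\setminus\mathrm{Sing}(B)$ with $B_j\cap U=V(z^1)$, let $\psi$ be the continuous psh potential of $\omega^{WP}_{\wt\DD}|U$ and $\chi=-\log h_{\wt\DD}$ the psh potential of $c_1(\lambda_{\wt\DD},h_{\wt\DD})|U$. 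The difference $\chi-\psi$ is $L^1_{\mathrm{loc}}$ on $U$ and pluriharmonic on $U\setminus B_j$, so Lemma~\ref{le:extL1} gives
$$
\chi-\psi=\beta_j\log|z^1|^2+\mathrm{Re}\!\left(a_{-1}(z')/z^1\right)+\mathrm{Re}(F(z^1,z'))
$$
with $\beta_j\in\R$, $a_{-1}\in\cO(B_j\cap U)$, $F\in\cO(U)$, and correspondingly $\Theta|U=\beta_j[B_j]+T_v$ in the notation of \eqref{eq:extddbchi}.

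The main obstacle is the vanishing of the normal-derivative current $T_v$, which I would deduce from the plurisubharmonicity of $\chi$. For almost every slice $L_{z'_0}=\{z'=z'_0\}$ the restriction $\chi|_{L_{z'_0}}$ is subharmonic on the corresponding disk in $\C$, while $\psi|_{L_{z'_0}}$ and $\mathrm{Re}(F|_{L_{z'_0}})$ are continuous; hence $\beta_j\log|z^1|^2+\mathrm{Re}(a_{-1}(z'_0)/z^1)$ must admit a subharmonic extension across $z^1=0$. But when $a_{-1}(z'_0)\neq 0$ the simple-pole term realises both $+\infty$ and $-\infty$ as directional limits at the origin, and the $\beta_j\log|z^1|^2$ contribution cannot control the $+\infty$ direction, which violates upper semicontinuity with values in $[-\infty,\infty)$. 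Holomorphicity of $a_{-1}$ then forces $a_{-1}\equiv 0$ on $B_j\cap U$, so $T_v=0$ and $\Theta|U=\beta_j[B_j]$. Canonicity of the Lemma~\ref{le:extL1} decomposition combined with connectedness of each irreducible component of $B$ shows $\beta_j\in\R$ is globally well-defined on each $B_j$; summing over $j$ yields \eqref{eq:ext} on $\wt\DD\setminus\mathrm{Sing}(B)$.
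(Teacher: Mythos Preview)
Your argument is correct, and the first half (invoking Theorem~\ref{th:mainext} to produce the modification and the extended pair $(\lambda_{\wt\DD},h_{\wt\DD})$) is exactly what the paper does. For the decomposition \eqref{eq:ext}, however, the paper proceeds differently: it regards \eqref{eq:ext} as already contained in the conclusion of the cited extension theorem (Proposition~\ref{pr:ext}, formula \eqref{eq:extmf}, generalized to an snc boundary), and then observes that on the regular part of $\wt\DD$ this is nothing but Siu's decomposition of the closed positive current $c_1(\lambda_{\wt\DD},h_{\wt\DD})$, with $\omega^{WP}_{\wt\DD}$ as the residual part having vanishing Lelong numbers thanks to its continuous potentials. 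Your route is more internal: you localize, apply Lemma~\ref{le:extL1} to the difference of potentials, and kill the $T_v$-term by the elementary observation that a plurisubharmonic weight cannot blow up to $+\infty$ along a ray. This is a valid and self-contained substitute for the appeal to Siu, and in effect reproves the relevant piece of Proposition~\ref{pr:ext}. It is worth noting that your psh-based elimination of $T_v$ hinges on the positivity of the curvature of $h_{\wt\DD}$; in the later comparison with the Quillen metric $h^Q_\xi$ (Theorem~\ref{th:main}) that positivity is no longer available, which is precisely why the paper must invoke Yoshikawa's theorem there rather than an argument of your type.
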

We observe that on the regular part of $\wt\DD$ the decomposition \eqref{eq:ext} is equal to Siu's decomposition (cf.\ \cite[(2.18)]{analmeth}),  in which the residual part $\omega^{WP}_{\wt\DD}$ has vanishing Lelong numbers, and where only finitely many currents of integration occur.

\begin{lemma}\label{le:Qext}
The Quillen metric $h^Q_\xi$ on $\lambda(\xi)$, which is defined over $\mathbf{D'}$, induces a singular hermitian metric on $\lambda(\xi)$ over $\DD$, i.e.\ $-\log h^Q_\xi$ is locally of class $L^1$ with respect to $\DD$.
\end{lemma}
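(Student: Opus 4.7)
My plan is to work on the modification $\tau:\wt\DD\to\DD$ of Proposition~\ref{pr:mfdext} and then push the conclusion down to $\DD$. On $\wt\DD$ the extension $(\lambda_{\wt\DD},h_{\wt\DD})$ is by construction a singular hermitian line bundle, so in every holomorphic trivialization $-\log h_{\wt\DD}$ is locally of class $L^1$ on $\wt\DD$; this is the starting point.

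By Proposition~\ref{pr:mfdext} we have $\lambda_{\wt\DD}|\mathbf{D'}\simeq\tau^*\lambda(\xi)|\mathbf{D'}$ canonically, so the line bundle $\tau^*\lambda(\xi)\otimes\lambda_{\wt\DD}^{-1}$ on $\wt\DD$ is trivial on $\wt\DD\setminus B$. Smoothness of $\wt\DD$ together with snc-ness of $B$ gives the Picard exact sequence
\[
\bigoplus_i\mathbb Z\cdot[B_i]\longrightarrow\mathrm{Pic}(\wt\DD)\longrightarrow\mathrm{Pic}(\wt\DD\setminus B)\to 0,
\]
so that $\tau^*\lambda(\xi)\simeq\lambda_{\wt\DD}\otimes\cO_{\wt\DD}(\sum_i m_i B_i)$ for some integers $m_i$. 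Let $\sigma$ denote the canonical meromorphic section of $\cO_{\wt\DD}(\sum_i m_i B_i)$, equal to $1$ in the trivialization on $\wt\DD\setminus B$. In local holomorphic frames $e'$ of $\lambda_{\wt\DD}$ and $e:=e'\otimes\sigma$ of $\tau^*\lambda(\xi)$, the canonical identification of metrics over $\mathbf{D'}$ reads $\|e\|^2_{\tau^*h^Q_\xi}=|\sigma|^2\,\|e'\|^2_{h_{\wt\DD}}$, and I extend $\tau^*h^Q_\xi$ to $\wt\DD$ by this identity. Taking logarithms yields
\[
-\log\|e\|^2_{\tau^*h^Q_\xi}\;=\;-\log\|e'\|^2_{h_{\wt\DD}}\;-\;2\log|\sigma|,
\]
and both summands are in $L^1_{\mathrm{loc}}(\wt\DD)$: the first because $h_{\wt\DD}$ is a singular hermitian metric by Proposition~\ref{pr:mfdext}, the second because $\sigma$ is meromorphic so $\log|\sigma|$ is locally integrable.

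To descend to $\DD$, observe that $\tau$ is a proper holomorphic modification whose holomorphic Jacobian $J_\tau$ is locally bounded on $\wt\DD$. The change of variables formula then gives, for any relatively compact $U\subset\DD$,
\[
\int_U|\log h^Q_\xi|\,dV_\DD\;=\;\int_{\tau^{-1}(U)}|\log\tau^*h^Q_\xi|\cdot|J_\tau|^2\,dV_{\wt\DD}\;\leq\;C\int_{\tau^{-1}(U)}|\log\tau^*h^Q_\xi|\,dV_{\wt\DD}\;<\;\infty,
\]
which shows $-\log h^Q_\xi\in L^1_{\mathrm{loc}}(\DD)$, i.e.\ $h^Q_\xi$ extends to a singular hermitian metric on $\lambda(\xi)$ over $\DD$.

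The substantive input is Proposition~\ref{pr:mfdext}: once the positively curved extension $h_{\wt\DD}$ on the modification is in hand, the remainder is formal --- a Picard group computation to identify the divisorial correction, the local integrability of $\log|\sigma|$ for a meromorphic section, and the pushforward of $L^1$ functions under a proper holomorphic modification with bounded Jacobian. The only subtlety is to align the Picard isomorphism $\tau^*\lambda(\xi)\simeq\lambda_{\wt\DD}\otimes\cO_{\wt\DD}(\sum m_i B_i)$ with the canonical identification of metrics on $\mathbf{D'}$, which holds up to a nowhere vanishing holomorphic factor that can be absorbed into the frame.
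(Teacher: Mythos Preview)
Your argument has a genuine gap at the step where you write
\[
\|e\|^2_{\tau^*h^Q_\xi}=|\sigma|^2\,\|e'\|^2_{h_{\wt\DD}}
\]
on $\mathbf{D'}$. You are conflating two different isomorphisms over $\mathbf{D'}$: the \emph{canonical} isomorphism $\psi:\tau^*\lambda(\xi)|_{\mathbf{D'}}\simeq\lambda_{\wt\DD}|_{\mathbf{D'}}$ coming from the extension construction (this is the one under which $\tau^*h^Q_\xi$ and $h_{\wt\DD}$ agree), and the restriction to $\mathbf{D'}$ of the global Picard isomorphism $\tau^*\lambda(\xi)\simeq\lambda_{\wt\DD}\otimes\cO_{\wt\DD}(\sum m_iB_i)$ (followed by the canonical trivialization of $\cO(\sum m_iB_i)$ via $\sigma$). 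These differ by a unit $u\in\cO^*(\mathbf{D'})$, and the correct identity is $\|e\|^2_{\tau^*h^Q_\xi}=|u|^2|\sigma|^2\|e'\|^2_{h_{\wt\DD}}$. Nothing rules out that $u$ has an essential singularity of the type $e^{1/z_1^2}$ along $B$, in which case $\log|u|=\mathrm{Re}(1/z_1^2)\notin L^1_{\mathrm{loc}}$ and your $L^1$ estimate fails. Your remark that $u$ ``can be absorbed into the frame'' is exactly the point that breaks: absorbing $u$ into a frame of $\tau^*\lambda(\xi)$ over $U$ would require $u$ to extend to a unit on all of $U$, not merely on $U\setminus B$. (Note also that $\wt\DD$ in Proposition~\ref{pr:mfdext} is only normal, not smooth --- hence the later desingularization $\mu:\wh\DD\to\wt\DD$ --- so the Picard sequence you invoke is not directly available; but this is secondary to the main gap.)

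The paper's proof supplies precisely the missing growth control, and it does so by a completely different route that avoids Proposition~\ref{pr:mfdext} altogether. It restricts to curves $C\not\subset B$ and invokes the results of Section~\ref{se:appY} (which rest on Yoshikawa's theorem): on each such curve $h^Q_{\wt\xi}|_C=|\sigma_C|^2\cdot h^{\mathrm{cont}}_C$ with $-\log h^{\mathrm{cont}}_C$ continuous and with the vanishing orders of $\sigma_C$ \emph{uniformly bounded} by projectivity. Choosing $M$ large, $-\log\bigl(h^Q_{\wt\xi}/|\sigma_B|^{2M}\bigr)$ is then subharmonic on every analytic curve through $B$ and tends to $-\infty$ along $B$; by the Grauert--Remmert curve criterion it is plurisubharmonic on the normal space $\wt\DD$, hence locally $L^1$. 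Since $\log|\sigma_B|$ is locally $L^1$ as well, $-\log h^Q_{\wt\xi}$ is. In short, the analytic input you tried to sidestep (Yoshikawa's control of the Quillen metric along one-parameter degenerations) is exactly what excludes higher-order essential singularities of $-\log h^Q_\xi$ along $B$; Proposition~\ref{pr:mfdext} alone does not.
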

We will use the same notation $h^Q_\xi$ for this metric over $\DD$.
\begin{proof}
We use the fact that $h^Q_\xi$ has positive curvature on $\mathbf{D'}$. Let $\wt\xi$ be the pullback of $\xi$ to $\cX\times_\DD \wt\DD$. In Section~\ref{se:appY} we studied restrictions to curves: By \eqref{eq:comphQ} and \eqref{eq:comphQg} the restriction of $h^Q_{\wt \xi}$ to a curve $C$ (not contained in $B$) is of the form $ |\sigma_C|^2  h^Q_{\mu^*(\wt \xi|C)}$, where $h^Q_{\mu^*(\wt \xi|C)}$ is a singular hermitian metric with a continuous local  potential for $-\log h^Q_{\mu^*(\wt \xi|C)}$, and $\sigma_C$ a canonical section of $\cO_C(B\cap C)$. (The map $\mu$ was constructed depending on $C$ in Section~\ref{se:appY}.)

Since we are dealing with projective algebraic varieties, the multiplicities of the functions $\sigma_C$ in \eqref{eq:comphQ} and \eqref{eq:comphQg} resp.\ are uniformly bounded. Now we can take a canonical section $\sigma_B$ for the snc divisor $B=\tau^{-1}(A)$ and a high multiple $M$ so that
$$
-\log(h^Q_{\wt\xi}/|\sigma_B|^{2M})|C
$$
is subharmonic and not identically equal to $-\infty$ for all curves $C$ not contained in $B$. Namely, we know that on such a curve $C$ the metric $h^Q_{\wt\xi}$ divided by some power (depending upon $C$) of the canonical section is continuous so that by taking $M$ large enough the potential $-\log(h^Q_{\wt\xi}/|\sigma_B|^{2M})$ tends to $-\infty$ when approaching the set $B$. Also this guarantees semi-continuity of the extension. We use the definition of plurisubharmonic functions on normal spaces by Grauert and Remmert from \cite{g-r}. Restricted to any local analytic curve intersecting $B$ the function is obviously subharmonic. In any case, the above potential extends as a \psh\ function to $B$ by assigning the value $-\infty$. Now $1/|\sigma_B|^{2M}$ is a singular hermitian metric so that $h^Q_{\wt\xi}$ is also a singular hermitian metric.

\end{proof}

{\bf Outline of the construction. } Over the space $\DD$ the determinant line bundle $\lambda(\xi)$ is defined by the flat universal family, and we know that the Quillen metric over $\DD'$ defines a singular metric on $\lambda(\xi)$ over all of $\DD$ by Lemma~\ref{le:Qext}. On the other hand there exists a holomorphic extension of $(\lambda^Q_\xi, h^Q_\xi)|\DD'$ in the sense of
Proposition~\ref{pr:mfdext}, which is based uponon the existence of the \wp current. The latter possesses a continuous potential by Proposition~\ref{pr:fibint}, and Theorem~\ref{th:VAthm2}. However the extension provided by the theorem is only unique up to an isomorphism in codimension one. Such isomorphisms are treated in Theorem~\ref{th:flat}. In this way extra terms occur, namely a flat line bundle $L$, and an exact current $S_v$, which is induced by a holomorphic section of the normal bundle of the critical divisor. The latter term will be eliminated by Yoshikawa's Theorem.

We keep the assumptions from Proposition~\ref{pr:mfdext} and Lemma~\ref{le:Qext}.

\begin{theorem}\label{th:main}
  There exists a desingularization $\mu:\wh \DD \to \wt\DD$ of the logarithmic pair $(\wt\DD,B)$ together with a flat holomorphic line bundle $L$ on $\wt \DD$ such that for an integer $m\in \mathbb N$ the following statements hold:
  \begin{itemize}
    \item[(i)]
    In codimension two we have
    \begin{equation}\label{eq:lambdaD}
      \lambda^{\otimes m}_{\wt\DD}= \tau^*\lambda^{\otimes m}(\xi) \otimes \cO_{\wt\DD}\left(\sum m \beta_j\cdot B_i\right)\otimes L,
    \end{equation}
    and $L$ is holomorphically trivial over the preimage of $\DD'$. On $\wh\DD$ we have
    $$
    \lambda^{\otimes m}_{\wh\DD}= \mu^*\tau^*\lambda^{\otimes m}(\xi) \otimes \cO_{\wh\DD}\left(\sum m \wh \beta_j\cdot \wh B_i\right)\otimes \wh L,
    $$
    where the $\wh B_i$ are the components of the snc divisor $\mu^*B$, $\wh \beta_i\in \R$, and $\wh L$ is a flat line bundle on $\wh\DD$.
    \item[(ii)]
    The Quillen metric $h^Q_\xi$ locally written as $e^{-\chi^Q_\xi}$ pulled back to $\wh\DD$ locally satisfies the  relation
    \begin{gather*}
    \chi^Q_\xi = \chi^{WP}+ \sum \wh\beta_i \log|z_i|^2,
    \end{gather*}
    where the continuous functions  $\chi^{WP}$ are local potentials for the \wp current, and the functions $z_i$ define canonical sections of $\cO_{\wh\DD}(\wh B_i)$.
    \item[(iii)] The Chern current of $(\lambda(\xi),h^Q_\xi)$ satisfies
    $$
    \omega^{WP}_{\wh\DD} = \mu^*\tau^* c_1(\lambda(\xi),h^Q_\xi)+ \sum \gamma_i[\wh B_i],\; \gamma_i\geq 0 .
    $$
  \end{itemize}
\end{theorem}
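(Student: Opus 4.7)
The plan is to compare the two natural extensions of $(\lambda(\xi),h^Q_\xi)|_{\DD'}$ to $\wt\DD$ via Theorem~\ref{th:flat}, and then to kill the residual normal-bundle current $T_v$ by means of Yoshikawa's theorem in the form of Corollary~\ref{co:wp}.

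First I would form the ratio line bundle $\mathcal{M}:=\lambda_{\wt\DD}\otimes(\tau^*\lambda(\xi))^{-1}$ equipped with the quotient singular hermitian metric $h_\mathcal{M}=h_{\wt\DD}\cdot(h^Q_\xi)^{-1}$. Over $\wt\DD\setminus B$ both factors coincide canonically with the original Quillen-metric pair, by Proposition~\ref{pr:mfdext} and Lemma~\ref{le:Qext}, so $(\mathcal{M},h_\mathcal{M})$ is trivial there. Applying the snc-version of Theorem~\ref{th:flat} then produces rational numbers $\wt\beta_j$, a flat $\mathbb{Q}$-line bundle $L$ on $\wt\DD$, and a holomorphic section $v$ of $N_{B|\wt\DD}$ such that $\mathcal{M}\simeq\cO_{\wt\DD}(\sum\wt\beta_j B_j)\otimes L$ and $c_1(\mathcal{M},h_\mathcal{M})=\sum\wt\beta_j[B_j]+T_v$. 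Pulling back along $\mu:\wh\DD\to\wt\DD$, a desingularization of the logarithmic pair $(\wt\DD,B)$, and passing to a common denominator $m$ turns $L$ into an honest flat line bundle $\wh L$ on $\wh\DD$; together with Proposition~\ref{pr:mfdext} this yields part~(i).

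To eliminate $T_v$, I would restrict all ingredients to a generic smooth curve $C\subset\wh\DD$ meeting a single component $\wh B_i$ transversally in one point and avoiding the other components. By Proposition~\ref{pr:cont} together with Proposition~\ref{pr:comphQ} and Corollary~\ref{co:wp}, the local potential of $c_1(\lambda(\xi),h^Q_\xi)|_C$ admits only a logarithmic singularity at $C\cap\wh B_i$, and the same is therefore true for the potential of $c_1(\mathcal{M},h_\mathcal{M})|_C$. By the principal-part analysis of Lemma~\ref{le:extL1}, however, a non-vanishing $v$ would contribute a genuine simple pole $\mathrm{Re}(a_{-1}(z')/z^1)$ to this restriction. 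The absence of such simple poles along every generic transverse curve, combined with holomorphicity of $v$, forces $v\equiv 0$ on each component of $B$, hence $T_v=0$. Parts (ii) and (iii) now follow by comparing potentials: writing $\chi^Q_\xi=-\log h^Q_\xi$ and letting $\chi^{WP}$ be a continuous local potential for $\omega^{WP}_{\wh\DD}$ (Proposition~\ref{pr:contpot}), the vanishing $T_v=0$ combined with the Chern current formula of Proposition~\ref{pr:mfdext} gives $\chi^Q_\xi=\chi^{WP}+\sum\wh\beta_i\log|z_i|^2$ modulo pluriharmonic terms after pullback to $\wh\DD$. Applying $\tfrac{\ii}{2\pi}\pt\db$ yields (iii) with $\gamma_i=-\wh\beta_i$, and the inequality $\gamma_i\geq 0$ is read off by restricting to a transverse curve $C$ and invoking Corollary~\ref{co:wp}, according to which $\omega^{WP}_C$ exceeds the Quillen Chern current on $C$ by an effective combination of point masses.

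The main obstacle I anticipate is the bootstrap from Yoshikawa's one-parameter continuity statement to the global vanishing of the holomorphic section $v$ of $N_{B|\wt\DD}$. It hinges on the availability of sufficiently many transverse test curves through each component of $B$ and on the base-change compatibility of both $\omega^{WP}$ and the Quillen construction (Propositions~\ref{pr:contpot} and~\ref{pr:comphQ}), which the projectivity of $\wh\DD$ together with the reduction to normal parameter spaces ensures.
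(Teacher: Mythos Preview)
Your proposal is correct and follows essentially the same route as the paper: form the ratio of the two extensions, apply Theorem~\ref{th:flat} to obtain the $\beta_j$, the flat factor $L$, and the residual current $T_v$, and then use Yoshikawa's theorem via restriction to transverse curves (Propositions~\ref{pr:cont}, \ref{pr:comphQ}, Corollary~\ref{co:wp}) to force $v\equiv 0$, after which (ii) and (iii) drop out by comparing potentials. The only point the paper handles with slightly more care is that Theorem~\ref{th:flat} is stated for manifolds, so on the possibly singular normal space $\wt\DD$ it is first applied \emph{in codimension two} (defining $L$ there by \eqref{eq:lambdaD} and noting it is flat in codimension two), and the full statement is obtained only after pulling back to the log-resolution $\wh\DD$; your argument implicitly does the same.
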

\begin{proof}
  The line bundles $\tau^*(\lambda(\xi))$, and $\lambda_{\wt\DD}$ exist on all of $\wt\DD$.
  We apply Theorem~\ref{th:flat} to $\wt\DD$ {\em in codimension two} first with numbers $\beta_i$ for each component of $B$ given by this theorem.  It follows from the construction that the line bundle $\cO_{\wt\DD}(\sum\beta_i\cdot B_i)$ is defined over all of $\wt\DD$, since the divisors $B_i$ exist as (Cartier) divisors on all of $\wt \DD$ (cf.\ Theorem~\ref{th:mainext} and remark thereafter). Now the line bundle $L$ can be defined by  \eqref{eq:lambdaD} on the whole space $\wt\DD$. So far it is holomorphic, and flat in codimension two. Theorem~\ref{th:flat} is applied to the pullbacks to $\wt\DD$ of the above bundles, which implies the second part of (i).

  In order to compute the Quillen metric $h^Q_\xi$ pulled back to $\wh\DD$ we apply Theorem~\ref{th:flat}(ii,iii) -- it is necessary to see that the sections $v$ and the related exact current $T_v$ vanish, which is a consequence of Yoshikawa's Theorem (Proposition~\ref{pr:cont}, and Proposition~\ref{pr:comphQ}). This is the last step.

  We restrict \eqref{eq:ext} to a curve $C$ that is not contained in $A$, and pull the data back to the proper transform $\wt C$ with respect to $\tau$. We consider the restriction $c_1(\lambda_{\wt \DD},h_{\wt\DD})|\wt C  = c_1(\lambda_{\wt \DD}|\wt C, h_{\wt\DD}|\wt C)$, which is meaningful since $C$ has a non-empty intersection with $\mathbf{D'}$. On the right hand side of this equation we have the restriction of the \wp current, which has a continuous potential, and furthermore we have the current of integration $\sum_j \beta*_j [B_j\cap \wt C]$. For the restriction of the \wp current to $\wt C$ we consider \eqref{eq:owpC} and \eqref{eq:wpchat} resp. These show that for the Quillen metric $h^Q_\xi$ on the determinant line bundle $\lambda(\xi)$
  $$
  -\log h^Q_\xi
  $$
  is continuous over such curves up to logarithmic poles, a fact that implies the absence of a current $S_v$.
\end{proof}

{\bf Non-normal structures.} Currents, and singular hermitian metrics on holomorphic line bundles on {\em non-normal}, reduced complex spaces are usually defined as currents on the normalization and singular hermitian metrics on the the respective line bundles pulled back to the normalization. By \cite[Theorem~2]{sch:arxiv} the extension theorem holds also in the non-normal case, as well as our main result.

{\bf Ampleness modulo boundary.} It would desirable to apply general results on restricted volumes of line bundles to Douady spaces, which in particular require that the \wp current $\omega^{WP}$ is a \ka current. At points with reduced fibers strict positivity was shown in \cite{a-s}. To include points with non-reduced fibers the weakly semistable reduction theorem of Viehweg \cite{vi} is necessary.

{\bf Acknowledgement.} The second named author would like to thank Ken-Ichi Yoshikawa for a helpful communication.


\begin{thebibliography}{KKMSD}

\bibitem[AN]{an} Andreotti, A., Norguet, F.: La convexité holomorphe dans l’espace analytiques des cycles d’unevariété algébrique. Ann.\ Sc.\ Norm.\ Super.\ Pisa, Sci.\ Fis.\ Mat., III.\ Ser.\ {\bf 21} (1967), 31--82.

\bibitem[AS]{a-s} Axelsson, R., Schumacher, G.: \ka geometry of Douady spaces. Manuscr.\ Math. {\bf 121} (2006), 277--291.

\bibitem[ABS]{abs} Axelsson, R., Biswas, I., Schumacher, G.: Kähler structure on Hurwitz spaces. Manuscr.\ Math.\ {\bf 147} (2015),  63--79.

\bibitem[Ba1]{b1} Barlet, D.: Espace analytique r\'eduit des cycles analytiques complexes compacts d'un espace analytique complexe de dimension finie. Fonctions de plusieurs variables complexes, II (S\'em.\ Fran\c{c}ois Norguet, 1974--1975). Lecture Notes in Math., {\bf 482}, 1--158. Springer, Berlin, 1975.

\bibitem[Ba2]{b2} Barlet, D.: Familles analytiques de cycles et classes fondamentales relatives. Functions of several complex variables, IV (Sem.\ Fran\c{c}ois Norguet, 1977--1979). Lecture Notes in Math.\, {\bf 807}, 1--158. Springer, Berlin, 1980.

\bibitem[Bi1]{bi1} Bismut, J.-M.: Métriques de Quillen et dégénérescence de variétés kählériennes. C.\ R.\ Acad.\ Sci.\ Paris Sér.\ I.\ Math.\ {\bf 319} (1994), 1287--1291.

\bibitem[Bi2]{bi2} Bismut, J.-M.: Quillen metrics and singular fibres in arbitrary relative dimension. J.\ Algebraic Geom.\ {\bf 6} (1997), 19--149.

\bibitem[BGS]{bgs} Bismut, J.-M.; Gillet, H.; Soul\'e, Ch.: Analytic torsion and holomorphic determinant bundles I, II, III. Comm.\ Math.\ Phys.\ {\bf 115} (1988), 49--78, 79--126, 301--351.

\bibitem[BS1]{b-s2} Biswas, I.; Schumacher, G.: Determinant bundle, Quillen metric, and  Petersson-Weil form on moduli spaces. GAFA, Geometric and Functional Analysis {\bf 9} (1999), 226--256.

\bibitem[BS2]{b-s} Biswas, I.; Schumacher, G.: Generalized Petersson-Weil metric on the Douady space of embedded manifolds. Complex analysis and algebraic geometry, 109--115, de Gruyter, Berlin, 2000.

\bibitem[BS3]{b-s-higgs} Biswas, I., Schumacher, G.: Geometry of moduli spaces of Higgs bundles. Commun.\ Anal.\ Geom.\ {\bf 14} (2006), 765--793.

\bibitem[Ca]{campana} Campana, F.: Algebricité et compacité dans l'espace des cycles d'un espace analytique complexe. Math.\ Ann.\ {\bf 251} (1980), 7--18.

\bibitem[D2]{analmeth} Demailly, J.-P.: Analytic Methods in Algebraic Geometry, Université de Grenoble I, Institut Fourier, July 2011.

\bibitem[Do]{do} Douady, A.: Le probl\`{e}me des modules pour les sous-espaces analytiques compacts d'un espace analytique donné. Ann.\ Inst.\  Fourier {\bf 16} (1966), 1--95.

\bibitem[F1]{f0} Fujiki, A.: Closedness of the Douady Spaces of Compact Kähler Spaces. Publ.\ RIMS, Kyoto Univ.\ {\bf 14} (1978), 1--52.

\bibitem[F2]{f1} Fujiki, A.: On the Douady space of a compact complex space in the category ${\cC}$. Nagoya Math.\ J.\ {\bf 85} (1982), 189--211.

\bibitem[F3]{f2} Fujiki, A.: On the Douady space of a compact complex space in the category ${\cC}$. II. Publ.\ Res.\ Inst.\ Math.\ Sci.\ {\bf 20} (1984), 461--489.

\bibitem[FS]{f-s} Fujiki, A.; Schumacher, G.: The moduli space of extremal compact \ka  manifolds and generalized Weil-Petersson metrics. Publ.\ Res.\ Inst.\ Math.\ Sci.\ {\bf 26} (1990), 101--183.

\bibitem[GR]{g-r} Grauert, H., Remmert, R.: Plurisubharmonische Funktionen in komplexen Räumen. Math.\ Z.\ {\bf 65} (1956) 175--194.


\bibitem[KM]{k-m} Knudsen, F., Mumford, D.: The projectivity of the moduli space of stable curves I: Preliminaries on ``det'' and ``div'', Math.\ Scand., {\bf 39} (1976), 19--55.

\bibitem[M]{m} Mumford, D.: Stability of projective varieties. L'Ens.\ Math. {\bf 23} (1977), 39--110.

\bibitem[OTT1]{ott1} O’Brian, N., Toledo, D., Tong, Y.: Grothendieck–Riemann–Roch for complex manifolds. Bull.\ Am.\ Math.\ Soc.\ (N.S.) {\bf 5}, (1981) 182--184.

\bibitem[OTT2]{ott2} O’Brian, N., Toledo, D., Tong, Y.: Grothendieck–Riemann–Roch formula for maps of complex manifolds. Math.\ Ann.\ {\bf 271}, (1982) 493--526.


\bibitem[Ri]{ri} Richberg, R.: Stetige streng pseudokonvexe Funktionen. Math.\ Ann.\ {\bf 175} (1968), 257--286.

\bibitem[Sch1]{sch:inv12} Schumacher, G.: Positivity of relative canonical bundles and applications. Invent.\ math. {\bf 190}, 1--56 (2012) and  Erratum to: Positivity of relative canonical bundles and applications. Invent.\ math.\  {\bf 192}, 253--255 (2013).

\bibitem[Sch2]{sch:arxiv} Schumacher, G.: An extension theorem for hermitian line bundles arXiv:1507.06195 [math.CV], and  An extension theorem for Hermitian line bundles. Aryasomayajula, A. (ed.) et al., Analytic and algebraic geometry. Selected papers based on the presentations at the international conference, Hyderabad, India 2015. Singapore: Springer; New Delhi: Hindustan Book Agency,  225--237 (2017).

\bibitem[St]{st} Stoll, W.: The Continuity of the Fiber Integral. Math.\ Zeitschr.\ {\bf 95} (1967),  87--138.

\bibitem[Vi]{vi} Viehweg, E.: Quasi-projective moduli for polarized manifolds. Ergebnisse der Mathematik und ihrer Grenzgebiete. 3. Folge. 30. Berlin: Springer-Verlag (1995).

\bibitem[Va1]{v1} Varouchas, J.: Stabilit\'e de la classe des vari\'et\'es K{\"a}hl\'eriennes par certains morphismes propres. Invent.\ Math.\ {\bf 77} (1984), 117--127.

\bibitem[Va2]{v2} Varouchas, J.: \ka  spaces and proper open morphisms. Math.\ Ann.\ {\bf 283} (1989), 13--52.

\bibitem[Wo]{wo1} Wolpert, S.: On obtaining a positive line bundle from the Weil-Petersson class. Am.\ J.\ Math.\ {\bf 107} (1985),  1485--1507.

\bibitem[Yo]{yo} Yoshikawa, K.-I.: On the singularity of Quillen metrics. Math.\ Ann.\ {\bf 337} (2007), 61--89.

\bibitem[ZT]{zt} Zograf, P.G., Takhtadzhyan, L.A.: A local index theorem for families of $\ol\pt$-operators on Riemann surfaces. Uspekhi Mat.\ Nauk {\bf 42} (1987), 133--150. Russian Math.\ Surveys {\bf 42} (1987), 169--190.




\end{thebibliography}
\end{document}